\documentclass[11pt,a4paper,reqno,oneside]{amsart}

\author{Nicholas Fleming-Vázquez}
 \address{Department of Mathematics, University of Toronto, 40
 	St George St., Toronto, ON M5S 2E4, Canada } 
  \urladdr{\url{https://nflemingvazquez.github.io/}}
 \thanks{This work is based on material from the author’s PhD thesis; he gratefully acknowledges funding from the University of Warwick and helpful feedback from Ian Melbourne and the referee.}
\title[Rates in the multivariate WIP for nonuniformly hyperbolic maps]{Rates of convergence in the multivariate weak invariance principle for nonuniformly hyperbolic maps}
\usepackage{amssymb,amsmath,amsthm}
\usepackage{mathtools}
\usepackage{mathrsfs}
\usepackage{graphicx}
\usepackage{xtab}
\usepackage{xcolor}
\usepackage{bbm}
\usepackage{enumitem}
\usepackage{url}
\usepackage{bbm}
\usepackage{tikz}
\usetikzlibrary{matrix,arrows}

 \newcommand{\Z}{\ensuremath{\mathbb{Z}}}
 \newcommand{\R}{\ensuremath{\mathbb{R}}}

\setlist[enumerate,1]{label=(\roman*)}

\newcommand{\norm}[1]{\left\lVert#1\right\rVert}
\newcommand{\pnorm}[1]{\left|#1\right|}
\newcommand{\norminf}[1]{\left|#1\right|_\infty}
\newcommand{\eqd}{=_d}
\newcommand{\holsp}[1][\eta]{\mathcal{C}^{#1}}

\newcommand{\Ex}[2][]{\mathbb{E}_{#1\!\!}\left[#2\right]}


\newcommand{\tf}{T}
\newcommand{\ms}{M}

\newcommand{\eps}{\varepsilon}

\newcommand{\map}[2]{\colon#1\rightarrow#2}
\newcommand{\Lip}{\mathrm{Lip}}

\makeatletter
\let\save@mathaccent\mathaccent
\newcommand*\if@single[3]{%
	\setbox0\hbox{${\mathaccent"0362{#1}}^H$}%
	\setbox2\hbox{${\mathaccent"0362{\kern0pt#1}}^H$}%
	\ifdim\ht0=\ht2 #3\else #2\fi
}
\newcommand*\rel@kern[1]{\kern#1\dimexpr\macc@kerna}
\newcommand*\widebar[1]{\@ifnextchar^{{\wide@bar{#1}{0}}}{\wide@bar{#1}{1}}}
\newcommand*\wide@bar[2]{\if@single{#1}{\wide@bar@{#1}{#2}{1}}{\wide@bar@{#1}{#2}{2}}}
\newcommand*\wide@bar@[3]{%
	\begingroup
	\def\mathaccent##1##2{%
		\let\mathaccent\save@mathaccent
		\if#32 \let\macc@nucleus\first@char \fi
		\setbox\z@\hbox{$\macc@style{\macc@nucleus}_{}$}%
		\setbox\tw@\hbox{$\macc@style{\macc@nucleus}{}_{}$}%
		\dimen@\wd\tw@
		\advance\dimen@-\wd\z@
		\divide\dimen@ 3
		\@tempdima\wd\tw@
		\advance\@tempdima-\scriptspace
		\divide\@tempdima 10
		\advance\dimen@-\@tempdima
		\ifdim\dimen@>\z@ \dimen@0pt\fi
		\rel@kern{0.6}\kern-\dimen@
		\if#31
		\overline{\rel@kern{-0.6}\kern\dimen@\macc@nucleus\rel@kern{0.4}\kern\dimen@}%
		\advance\dimen@0.4\dimexpr\macc@kerna
		\let\final@kern#2%
		\ifdim\dimen@<\z@ \let\final@kern1\fi
		\if\final@kern1 \kern-\dimen@\fi
		\else
		\overline{\rel@kern{-0.6}\kern\dimen@#1}%
		\fi
	}%
	\macc@depth\@ne
	\let\math@bgroup\@empty \let\math@egroup\macc@set@skewchar
	\mathsurround\z@ \frozen@everymath{\mathgroup\macc@group\relax}%
	\macc@set@skewchar\relax
	\let\mathaccentV\macc@nested@a
	\if#31
	\macc@nested@a\relax111{#1}%
	\else
	\def\gobble@till@marker##1\endmarker{}%
	\futurelet\first@char\gobble@till@marker#1\endmarker
	\ifcat\noexpand\first@char A\else
	\def\first@char{}%
	\fi
	\macc@nested@a\relax111{\first@char}%
	\fi
	\endgroup
}
\makeatother

\usepackage{hyperref}
\usepackage[a4paper,margin=1.25in]{geometry}
\hypersetup{colorlinks=true,
	linkcolor=blue,
	filecolor=magenta,      
	urlcolor=cyan,
	pdftitle={Rates of convergence in the multivariate weak invariance principle for nonuniformly hyperbolic maps},%
	pdfauthor={Nicholas Fleming-Vázquez}}

\newcommand{\wass}{\mathcal{W}}
\newcommand{\kant}{\wass_1}

\newcommand{\shat}[1]{\vphantom{#1}\smash[t]{\widehat{#1}}}
\newcommand{\stilde}[1]{\vphantom{#1}\smash[t]{\widetilde{#1}}}

\newcommand{\sepdholsp}[1]{\mathcal{H}^\eta_{#1}}
\newcommand{\dsemi}[1]{[#1]_{\eta}}
\newcommand{\sdsemi}[2]{[#1]_{\eta,#2}}
\newcommand{\dhnorm}[1]{\norm{#1}_\eta}

\newtheorem{theorem}{Theorem}[section]
\newtheorem{lemma}[theorem]{Lemma}
\newtheorem{prop}[theorem]{Proposition}
\newtheorem{corol}[theorem]{Corollary}
\newtheorem*{notation}{Notation}

\theoremstyle{definition}
\newtheorem{defn}[theorem]{Definition}
\newtheorem{example}[theorem]{Example}

\theoremstyle{remark}
\newtheorem{remark}[theorem]{Remark}

\numberwithin{equation}{section}

\begin{document}

\subjclass[2010]{Primary 37A50, 60F17, 37D25}

\date{}

\dedicatory{}

	\begin{abstract}
		We obtain rates of convergence in the weak invariance principle for $\R^d$-valued H\"older observables over nonuniformly hyperbolic maps. In particular, for maps modelled by a Young tower with superpolynomial tails (e.g.\ the Sinai billiard map, and Axiom A diffeomorphisms) we obtain a rate of $O(n^{-r})$ in the Wasserstein $p$-metric for all $r<1/4$ and $p<\infty$. Additionally, this is the first result on rates that covers certain invertible, slowly mixing maps, such as Bunimovich flowers.
	\end{abstract}
    \maketitle
		\section{Introduction}
Let $T:M\to M$ be a chaotic dynamical system with ergodic invariant probability measure $\mu$. Let $v:M\to \R^d$ be an observable with $\int v\, d\mu = 0 $ and let $S_n = \sum_{i=0}^{n-1}{v\circ T^i}$ be its sequence of Birkhoff sums. Consider the c\`adl\`ag process $W_n \in D([0,1],\R^d)$ defined by $W_n(t) = n^{-1/2} S_{[nt]}$. Then $v$ is said to satisfy the \textit{weak invariance principle (WIP)} if $W_n$ weakly converges to a Brownian motion. 

For many classes of dynamical systems the WIP is satisfied by all sufficiently regular observables. For example, for nonuniformly hyperbolic/expanding maps modelled by a Young tower whose return time function is square-integrable~\cite{young1998statistical,young1999recurrence}, the WIP is satisfied by all H\"older observables. This class includes many examples such as Axiom A diffeomorphisms, dispersing billiards and certain intermittent maps.

The problem of quantifying the convergence rate in the WIP for dynamical systems has attracted much interest recently~\cite{antoniou_melbourne,dedecker_merlevede_rio,liu_wang_stationary,liu_wang_sequential,paviato_rates_article,leppanen_nakajima_nakano,liu_saussol_vaienti_wang,melbourne_wang}. 

Antoniou \& Melbourne~\cite{antoniou_melbourne} obtained rates in the Prokhorov metric for $\R$-valued H\"older observables of nonuniformly expanding maps. In the same setting, Liu \& Wang \cite{liu_wang_stationary} obtained rates in the Wasserstein-$p$ metric and Dedecker, Merlev\`ede \& Rio~\cite{dedecker_merlevede_rio} established improved rates in the Wasserstein-$2$ metric. In particular, for systems modelled by Young towers with superpolynomial tails, \cite{antoniou_melbourne} obtained a rate of $O(n^{-\kappa})$ in the Prokhorov metric for all $\kappa<\frac{1}{4}$, \cite{liu_wang_stationary} obtained the same rate in the Wasserstein-$p$ metric for all $p<\infty$, and \cite{dedecker_merlevede_rio} achieved a sharper bound of $O(n^{-1/4}(\log n)^{1/4})$ in the Wasserstein-2 metric. The papers~\cite{antoniou_melbourne,liu_wang_stationary} rely on the martingale approximation method, whereas~\cite{dedecker_merlevede_rio} proves strong approximation results for a class of weakly dependent sequences of random variables. The method in~\cite{dedecker_merlevede_rio} is based on constructing an explicit approximating sequence of iid Gaussian random variables and controlling the approximation error via a quantitative conditional central limit theorem together with a Fuk–Nagaev type inequality.

The only existing result on rates in the \emph{multivariate} WIP (that is for $\R^d$-valued observables with $d>1$) for dynamical systems is due to Paviato~\cite{paviato_rates_article}, and applies to the aforementioned class of nonuniformly expanding maps, and also to nonuniformly hyperbolic flows. He obtained rates in the Wasserstein-$1$ metric that are independent of $d$ but are
at best $O(n^{-1/6})$ for $d > 1$. This is because he uses martingale methods, and obtaining better rates in the multivariate WIP remains an open problem for martingales (see the introduction of~\cite{multidim_martingale_asip} for more details).

In this paper, we obtain rates in the multivariate WIP in the Wasserstein-$p$ metric for nonuniformly hyperbolic maps modelled by Young towers. In particular, for systems modelled by a Young tower with superpolynomial tails we obtain a rate of $O(n^{-\kappa})$ in the Wasserstein-$p$ metric for all $\kappa<\frac{1}{4}$ and $p<\infty$. In contrast to existing results on rates in the WIP for nonuniformly hyperbolic maps, we do not assume exponential contraction along stable leaves so our results apply to slowly mixing invertible systems such as Bunimovich flowers~\cite{bunimovich_flowers} (see Example~\ref{exa:flowers}).

In our main theorem (Theorem~\ref{thm:rates_wip_lip}) we obtain rates in the WIP under a weak dependence condition called the functional correlation bound, where the rate obtained depends on the polynomial rate of decay in the functional correlation bound. By~\cite{fleming2022}, this condition is satisfied by systems modelled by Young towers with $O(n^{-\beta})$ tails for $\beta>2$. The proof of our main theorem is based on Bernstein's classical `big block-small block' method and is loosely inspired by the proof of the central limit theorem in~\cite[Section 7.8]{chernov_markarian}, see Remark~\ref{remark:chernov_clt} for more details.

We end this introduction by illustrating the rates we obtain in the WIP for several examples.
\begin{example}\label{exa:lsv_and_baker}
	Let $\alpha\in (0,1)$. Define $g:[0,1/2)\to [0,1]$ by $g(x) = x(1+2^\alpha x^\alpha)$. Consider the intermittent map $T:[0,1]\to [0,1]$ defined by 
	\[ 	T(x) = \begin{cases}
		g(x), & x \in [0,1/2),\\
		2x - 1, & x \in [1/2,1].
	\end{cases} \]
The LSV map $T$ is a prototypical example of a nonuniformly expanding map~\cite{lsvmaps} and has a unique absolutely continuous invariant probability measure. As in~\cite{melbourne_varandas_2016note}, consider an intermittent baker's map $S:[0,1]^2 \to [0,1]^2$ defined by 
\begin{equation*}
	S(x,y) = \begin{cases}
		(T(x),g^{-1}(y)), & x \in [0,1/2),\\
		(T(x),(y+1)/2), & x \in [1/2,1].
	\end{cases}
\end{equation*}
The map $S$ is invertible and has a unique physical measure. Let $\alpha\in (0,1/2)$. Then H\"older observables satisfy the WIP for both maps $T$ and $S$. Prior results on rates in the WIP cover the map $T$ but they do not apply to the nonuniformly hyperbolic map $S$ since $S$ contracts  at a polynomial rate along stable manifolds (which consist of vertical line segments). 

Set $\gamma = 1/\alpha-1$. We prove rates in the Wasserstein-1 metric for $\R^d$-valued H\"older observables for both maps $T$ and $S$. The rates are of the form $O(n^{-\kappa(\gamma)})$ where 
\[ \kappa(\gamma)= \frac{(\gamma-1)(2\gamma-1)}{2(4\gamma^2+\gamma-1)}. \]
More generally, we obtain rates in the Wasserstein-$p$ metric for all $1\le p< 2\gamma$. 

Let us now compare the rates we obtain with existing results for the map $T$. For $d=1$,~\cite{antoniou_melbourne} obtained a rate of $O(n^{-1/4+\alpha/2+\eps})$ in the Prokhorov metric, whilst in the restricted range $\alpha\in (0,1/4)$~\cite{liu_wang_stationary} obtained a rate of $O(n^{-1/4+\frac{\alpha}{4(1-\alpha)}+\eps})$ in the Wasserstein-$p$ metric for any $\eps>0$ and $p<\frac{1}{2\alpha}$. For $p=2$,~\cite{dedecker_merlevede_rio} obtained an improved rate of the form
\[ \begin{cases} 
	O(n^{\alpha-\frac12}(\log n)^{\frac{1}{2}}), & \alpha \in (\frac{1}{4},\frac{1}{2})\\
	O(n^{-\frac14}(\log n)^{\frac{1}{4}}), & \alpha \in (0,\frac{1}{4}].
\end{cases} \]
For all $d\ge 1$,~\cite{paviato_rates_article} obtained rates in the Wasserstein-$1$ metric of the form 
\[ \begin{cases}
	O(n^{\alpha - \frac12 + \eps}), & \alpha \in [\frac{1}{3},\frac{1}{2}) \\
	O(n^{-\frac{1}{6}+\eps}), & \alpha \in (0,\frac{1}{3}),
\end{cases}\]
where $\eps>0$ is arbitrary.

In particular, for the map $T$ our rates improve on those in~\cite{paviato_rates_article} for $d>1$ and $\alpha \le \frac{1}{16}(7-\sqrt{17})$. However, for $d=1$ the rates obtained in~\cite{dedecker_merlevede_rio} are better than ours for any $\alpha\in (0,1/2)$.
\end{example}
\begin{example}\label{exa:eslami_liverani}
    In~\cite{eslami_liverani}, Eslami \& Liverani considered a class of area-preserving $C^\infty$ almost Anosov maps on $\R^2/\Z^2$ with a neutral fixed point at $(0,0)$. The maps are of the form 
$T(x,y) = (x+h(x)+y,h(x)+y)$, where $h(x) = bx^3 + O(x^5)$ for some $b>0$. We obtain a rate of $O(n^{-5/38})$ in the $1$-Wasserstein metric for $\R^d$-valued H\"older observables.
\end{example}
\begin{example}[Bunimovich flowers]\label{exa:flowers}
	In~\cite{bunimovich_flowers}, Bunimovich introduced a class of planar chaotic billiards with both dispersing and focusing boundary components, which are now known as Bunimovich flowers. Let $T$ be the billiard map and let $\mu$ be the unique absolutely continuous invariant probability measure. We obtain a rate of $O(n^{-3/34})$ in the 1-Wasserstein metric for $\R^d$-valued H\"older observables.
\end{example}
See~\cite[Section 5]{balint_komalovics} for a detailed description of Bunimovich flowers and other examples of nonuniformly hyperbolic maps to which our results apply.
	\begin{notation}
		We write $a_n=O(b_n) $ or $ a_n\ll b_n $ if there exists a constant $ C>0 $ such that $ a_n\le Cb_n $ for all $ n\ge 1. $ We write $ a_n\sim b_n $ if $ \lim_n a_n/b_n=1. $
		
		For $ d\ge 1 $ and $ a,b\in \R^d $ denote $ a\otimes b=ab^T. $ We endow $ \R^d $ with the norm $ |y|=\sum_{i=1}^d |y_i|. $ Let $ \eta\in (0,1] $. Recall that $ v\map{\ms}{\R^d} $ is an $ \eta $-H\"older observable on a metric space $(M,\rho)$ if $ \dhnorm{v}=\norminf{v}+\dsemi{v} <\infty$, where $\dsemi{v}=\sup_{x\ne y}\frac{|v(x)-v(y)|}{\rho(x,y)^\eta}$. If $ \eta=1 $ we call $ v $ Lipschitz and write $ \Lip(v)=[v]_1 $.
	\end{notation}	
	The rest of this article is structured as follows. In Section~\ref{sec:main_result}, we recall the definition of the Wasserstein-$p$ metric, introduce the functional correlation bound that we consider and state our main result. In Section~\ref{sec:prelims}, we recall some results from the probability theory literature, as well as some consequences of the functional correlation bound. Finally, in Section~\ref{sec:proof_main_result} we prove our main result.
	\section{Setup and main result}\label{sec:main_result}
	\subsection{The Wasserstein $p$-metric} We consider the space $D = D([0,1],\R^d)$ of c\`adl\`ag functions $x:[0,1]\to \R^d$ with the supremum norm $\norminf{x}=\sup_{0\le t\le 1}|x(t)|$. 

	Given Radon probability measures $\mu$ and $\nu$ on $D$, we let $\Pi(\mu,\nu)$ denote the set of Radon probability measures on $D\times D$ for which the projections on the first and second factors coincide with $ \mu $ and $ \nu $, respectively.

	For $ p\ge 1 $, let $ \mathcal{P}_r^p(D) $ denote the set of Radon probability measures on $ D $ such that $ x\mapsto \norminf{x}^p $ is integrable. The \textit{Wasserstein-$ p $ metric} on $ \mathcal{P}_r^p(D) $ is defined by 
	\[ \wass_p(\mu,\nu)=\inf_{\sigma\in \Pi(\mu,\nu)}\left(\int_{D\times D}\norminf{x-y}^pd\sigma(x,y)\right)^{1/p}. \]
	If $X$ and $Y$ are random elements of $D$ with respective laws $\mu$ and $\nu$, we write $\wass_p(X,Y)=\wass_p(\mu,\nu)$.
	
	By the Kantorovich-Rubinstein Theorem, for all $\mu$, $\nu\in \mathcal{P}_r^1(D)$, we have that
	\begin{equation}\label{eq:kantorovich_rubenstein}
		\kant(\mu,\nu) = \sup\bigg\{\int f\, d\mu-\int f\, d\nu \bigg| f:D\to \R\text{ is Lipschitz and } \Lip(f)\le 1 \bigg\}.
	\end{equation}
	See~\cite[Chapter 3]{bogachev_weak_conv} for more details on the Wasserstein-$p$ metric.
	\subsection{The Functional Correlation Bound}
	Let $ (M,\rho) $ be a metric space and fix $ \eta\in(0,1] $. Recall that $\dsemi{v}=\sup_{x\ne y}\frac{|v(x)-v(y)|}{\rho(x,y)^\eta}$ denotes the $\eta$-H\"older seminorm of an observable $v:M\to \R$. Let $ q\ge 1$ be an integer. Given a function $ G\map{\ms^{q}}{\R} $ and $ 0\le i< q$ we denote
		\[\sdsemi{G}{i} =\sup_{x_0,\dots,x_{q-1}\in \ms}\dsemi{G(x_0,\dots,x_{i-1},\cdot,x_{i+1},\dots,x_{q-1})}. \]
		We call $ G $ \textit{separately $ \eta $-H\"older}, and write $ G\in \sepdholsp{q}(\ms)$, if $ \norminf{G}+\sum_{i=0}^{q-1} \sdsemi{G}{i}{<\infty}. $

	Let $\gamma>0$. We consider dynamical systems which satisfy the following property:
	\begin{defn}\label{def:fcb}
		Let $ \mu $ be a Borel probability measure and let $ T:M\to M $ be an ergodic $ \mu $-preserving transformation. Suppose that there exists a constant $C>0$ such that for all integers $ 0\le p< q,\ 0\le n_0\le \cdots\le n_{q-1} $,
		\begin{align}
			&\bigg|\int_\ms  G(T^{n_0}x,\dots,T^{n_{q-1}}x)d\mu(x)\nonumber\\
			&\qquad\qquad-\int_{\ms^2} G(T^{n_0}x_0,\dots,T^{n_{p-1}}x_0,T^{n_p}x_1,\dots,T^{n_{q-1}} x_1)d\mu(x_0)d\mu(x_1)\bigg|	\nonumber\\
			&\quad\le C(n_p-n_{p-1})^{-\gamma}\,\biggl(\norminf{G}+\sum_{i=0}^{q-1} \sdsemi{G}{i}\biggr)	\label{eq:fcb_bd}	
		\end{align}
		for all $G\in \sepdholsp{q}(\ms)$. Then we say that $T$ satisfies the Functional Correlation Bound with rate $n^{-\gamma}$.
	\end{defn}
	\begin{remark}\label{rk:fcb_implies_doc}If $T:M\to M$ satisfies the Functional Correlation Bound with rate $n^{-\gamma}$, then decay of correlations holds for $\eta$-H\"older observables with the same rate. Indeed, let $v,w\in \holsp(M,\R)$. Then setting $G(x_0,x_1)=v(x_0)w(x_1)$, we obtain that for all $n\ge 1$,
		\[\left|\int_M v\, w\circ T^n d\mu - \int_M v\, d\mu \int_M w\, d\mu \right|\le C\dhnorm{v}\dhnorm{w}n^{-\gamma}.\]
	\end{remark}	
	By~\cite[Theorem 2.3]{fleming2022}, if $T:M\to M$ is a mixing nonuniformly hyperbolic map modelled by a Young tower with tails of the form $O(n^{-(\gamma+1)})$, then $T$ satisfies the Functional Correlation Bound with rate $n^{-\gamma}$.

	The Functional Correlation Bound is inspired by a condition of the same name introduced by Lepp\"anen~\cite{leppanen2017functional}. However, for measure-preserving transformations the condition considered by Lepp\"anen is slightly more restrictive than Definition~\ref{def:fcb}; in particular, the term $\sum_{i=0}^{q-1}\sdsemi{G}{i}$ in~\eqref{eq:fcb_bd} is replaced by $\max_{0\le i<q}\sdsemi{G}{i}$.
	\subsection{Rates in the WIP} 
	Let $ T:M\to M $ satisfy the Functional Correlation Bound with rate $ n^{-\gamma} $, $ \gamma>1 $. Let $v\in \holsp(M,\R^d)$ be mean zero. For $n\ge 1$ and $t\in [0,1]$, we define
	$ S_n = \sum_{i=0}^{n-1}v\circ T^i $ and $W_n(t) = n^{-1/2}S_{[nt]}$. We view $W_n$ as a random element of $D([0,1],\R^d)$ defined on the probability space $(M,\mu)$. Note that the law of $W_n$ is a Radon measure since we can write $W_n = \pi(v,\dots,v\circ T^{n-1})$ where $\pi:(\R^d)^n \to D([0,1],\R^d)$ is continuous.

	 By~\cite[Theorem 1.2]{fleming_fcb_homogenisation}, the observable $v$ satisfies the WIP:
	\begin{lemma}
		Let $ T:M\to M $ satisfy the Functional Correlation Bound with rate $ n^{-\gamma} $, $ \gamma>1 $, and suppose that $ v\in \holsp(M,\R^d)$ with mean zero. Then the limit $\Sigma = \lim_{n\to \infty}n^{-1}\Ex[\mu]{S_n \otimes S_n}$ exists. Moreover, $W_n$ converges weakly in $D([0,1],\R^d)$ to a Brownian motion $W$ with covariance $\Sigma$.
	\end{lemma}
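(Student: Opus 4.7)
The plan is to follow the standard recipe for the WIP: prove convergence of the covariance, convergence of finite-dimensional distributions, and tightness in $D([0,1],\R^d)$, then combine these to deduce weak convergence.

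First I would establish the existence of $\Sigma$. Applying the Functional Correlation Bound with $q=2$ to the product $G(x_0,x_1)=v_i(x_0)v_j(x_1)$ (and mean-zero observables), Remark~\ref{rk:fcb_implies_doc} gives $|\Cov{v_i\circ T^m}{v_j\circ T^n}|\ll \dhnorm{v}^2(n-m)^{-\gamma}$. Since $\gamma>1$, the Green--Kubo series
\[ \Sigma = \Ex[\mu]{v\otimes v}+\sum_{n=1}^\infty\bigl(\Ex[\mu]{v\otimes v\circ T^n}+\Ex[\mu]{v\circ T^n\otimes v}\bigr) \]
converges absolutely, and expanding $S_n\otimes S_n$ as a double sum plus a Cesàro averaging argument gives $n^{-1}\Ex[\mu]{S_n\otimes S_n}\to\Sigma$.

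Next, by the Cramér--Wold device it suffices to prove the WIP for each scalar projection $\langle c,v\rangle$, $c\in\R^d$, converging to the appropriate one-dimensional Brownian motion with variance $c^T\Sigma c$. So I would reduce to the scalar case. For tightness of $W_n$, I would bound moments $\Ex[\mu]{|S_n|^{2k}}$ by expanding as a $2k$-fold sum and applying the Functional Correlation Bound with $q=2k$ and the product test function $G(x_0,\dots,x_{2k-1})=\prod_j \langle c,v\rangle(x_j)$; the decoupling estimate, combined with the summability of correlations from step~1, inductively yields the Burkholder-type bound $\Ex[\mu]{|S_n|^{2k}}\ll n^k$ for each integer $k\ge 1$. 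Together with stationarity this gives the classical moment criterion for tightness in $D([0,1],\R)$ (Billingsley's criterion).

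For the finite-dimensional distributions I would use the characteristic-function approach combined with Bernstein's \emph{big-block/small-block} decomposition (the same idea that underlies the main theorem of the paper). Fix $0=t_0<t_1<\dots<t_r\le 1$ and real coefficients $\xi_1,\dots,\xi_r$, and write the increments of $\langle c,W_n\rangle$ across $[t_{j-1},t_j]$ as sums of blocks of length $L_n$ separated by gaps of length $\ell_n$, where $L_n,\ell_n\to\infty$ with $\ell_n/L_n\to 0$. Approximating $\exp(i\theta\,\cdot)$ restricted to the range of the Birkhoff sums by separately H\"older functions and applying the Functional Correlation Bound to the resulting product across consecutive blocks controls the decoupling error by $O((n/L_n)\ell_n^{-\gamma})$, which vanishes for $\gamma>1$ with an appropriate choice of $L_n,\ell_n$. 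The remaining product of characteristic functions of independent big blocks, together with the variance computation from step~1 and a Lindeberg check (using $\Ex[\mu]{|S_{L_n}|^{2k}}\ll L_n^k$), converges to the Gaussian characteristic function $\exp\bigl(-\tfrac12\sum_{j,k}\xi_j\xi_k(t_j\wedge t_k)c^T\Sigma c\bigr)$.

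The main obstacle is the third step: the Functional Correlation Bound as stated applies to separately H\"older functions, whereas exponentials of Birkhoff sums are merely Lipschitz with Lipschitz constants that grow with the block length. Handling this carefully—either by truncation, by a smooth mollification of $e^{i\theta x}$ at the right scale, or by an induction that propagates a bound on H\"older seminorms—is the delicate technical point. Once fdd convergence and tightness are both in hand, Prokhorov's theorem yields $W_n\Rightarrow W$ in $D([0,1],\R^d)$ with $W$ a Brownian motion of covariance $\Sigma$.
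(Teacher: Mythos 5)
The paper does not prove this lemma at all --- it simply cites \cite[Theorem~1.2]{fleming_fcb_homogenisation}. Your sketch is therefore a genuinely different route: you outline a self-contained proof via the classical triumvirate (covariance convergence, tightness, fdd convergence by Bernstein blocking). The outline is sound, and the three ingredients are indeed what one expects: step~1 is exactly Remark~\ref{rk:fcb_implies_doc} plus summability; the moment bounds in step~2 are available (Lemma~\ref{lem:moment_bds_arbitrary_set} and Proposition~\ref{prop:maximal_ineq_birkhoff} already record the $2\gamma$-moment bound $\pnorm{S_n}_{2\gamma}\ll n^{1/2}$, which with $\gamma>1$ suffices for Billingsley's tightness criterion without needing all even moments); and the blocking argument in step~3 is the same machinery the paper uses for the quantitative Theorem~\ref{thm:rates_wip_lip}.

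One correction to your perceived ``main obstacle''. You worry that exponentials of Birkhoff sums are ``merely Lipschitz with Lipschitz constants that grow with the block length,'' and that this forces a truncation or mollification. It does not: $\theta\mapsto e^{i\theta}$ is globally Lipschitz with constant $1$ and uniformly bounded, so $F(x_1,\dots,x_k)=\exp(i\theta\langle c,\sum_j x_j\rangle)$ has $\Lip(F)\le|\theta||c|$ and $\norminf{F}\le 1$ \emph{independently} of $k$. The correct bookkeeping is precisely the one formalised in Lemma~\ref{lemma:weak_dependence}: one applies the FCB to $F$ composed with the separately H\"older block observables $\varPhi_i$. The block length enters only through the \emph{sum of separate H\"older seminorms}, which contributes a factor $\sum_i\sum_j\sdsemi{\varPhi_i}{j}=O(|\theta|\,n^{1/2})$ after the $n^{-1/2}$ normalisation, not through the Lipschitz constant of the exponential. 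This also means your decoupling error $O((n/L_n)\ell_n^{-\gamma})$ is off by this factor $n^{1/2}$; the correct bound is $O\bigl((n/L_n)\,\ell_n^{-\gamma}\,n^{1/2}\bigr)$, which still vanishes for a suitable choice of $L_n,\ell_n$ whenever $\gamma>1$ (take $L_n=n^a$, $\ell_n=n^b$ with $3/2-a-b\gamma<0$ and $0<b<a<1$, feasible since $\gamma>1$). With that correction your sketch is a legitimate alternative proof, just more elaborate than the citation the paper relies on.
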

	Our main theorem gives a bound on the rate of convergence in the WIP in the Wasserstein-$r$ metric for $1\le r<2\gamma$:
\begin{theorem}\label{thm:rates_wip_lip}
	Let $ T:M\to M $ satisfy the Functional Correlation Bound with rate $ n^{-\gamma} $, $ \gamma>1 $, and suppose that $ v\in \holsp(M,\R^d)$ with mean zero. Let $1\le r<2\gamma$. Then there is a constant $ C>0 $ such that $ \wass_r(W_n,W)\le Cn^{-\kappa(\gamma,r)} $ for all $ n\ge 1 $, where 
	\begin{equation*}
		\kappa(\gamma,r)=\frac{(2\gamma-r)(2\gamma-1)(\gamma-1)}{2(8\gamma^3 - r(2\gamma^2 + 3\gamma - 1))}.
	\end{equation*}
\end{theorem}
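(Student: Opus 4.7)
The plan is to implement Bernstein's big block-small block decomposition, decouple the big blocks using the Functional Correlation Bound, and then apply a quantitative invariance principle for sums of independent random vectors. Fix integers $L > \ell \ge 1$ (to be optimized later as powers of $n$), and set $m = L + \ell$ and $k = \lfloor n/m \rfloor$. Partition an initial segment of $\{0, \dots, n-1\}$ into $k$ alternating big blocks $B_j$ of length $L$ and small blocks of length $\ell$, treating any remainder as a final small block. Writing $U_j = \sum_{i \in B_j} v \circ T^i$, decompose $n^{1/2} W_n(t) = \sum_{j : B_j \subseteq [0,nt]} U_j + R_n(t)$, where $R_n$ collects the small-block sums and boundary effects. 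Using the Functional Correlation Bound, Rosenthal-type moment bounds of the form $\Ex[\mu]{|S_N|^r}^{1/r} \ll \sqrt{N}$ for $1 \le r < 2\gamma$ (obtained as in \cite{fleming2022}) give $\wass_r(W_n, \widehat{W}_n) \ll \sqrt{\ell/m}$, where $\widehat{W}_n$ is the process obtained from the big-block sums alone.

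I would then decouple the big blocks. Let $\widetilde{U}_0, \dots, \widetilde{U}_{k-1}$ be independent random vectors, each distributed as $U_0$ (note that all the $U_j$ have the same law since $\mu$ is $T$-invariant), and let $\widetilde{W}_n$ be the associated piecewise-constant process. For $r = 1$, Kantorovich--Rubinstein duality~\eqref{eq:kantorovich_rubenstein} reduces matters to bounding $|\Ex{f(\widehat{W}_n)} - \Ex{f(\widetilde{W}_n)}|$ over $1$-Lipschitz $f : D \to \R$. The key observation is that $f(\widehat{W}_n) = G(x, Tx, \dots, T^{n-1}x)$ for an explicit separately H\"older function $G$ on $M^{n}$ (supported on big-block coordinates) whose seminorms inherit from $\Lip(f) \le 1$, $\dhnorm{v}$, and the scaling $n^{-1/2}$. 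Iterating Definition~\ref{def:fcb} $k-1$ times, peeling off one big block at a time with separation $\ell$, yields a decoupling error of order $n^{-1/2} k \ell^{-\gamma}$ with implied constant depending on $\dhnorm{v}$. For general $1 \le r < 2\gamma$, I would upgrade this $\wass_1$ bound to $\wass_r$ by interpolating against an $L^r$ moment bound on $\widehat{W}_n - \widetilde{W}_n$ coming from the Rosenthal-type estimate applied to both processes.

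It remains to compare $\widetilde{W}_n$, built from i.i.d.\ block sums, to a Brownian motion with covariance $\Sigma$. I would invoke a quantitative multivariate invariance principle for i.i.d.\ sums in the Wasserstein-$r$ metric---for instance a Koml\'os--Major--Tusn\'ady-type strong approximation, or an Einmahl/Zaitsev coupling---using the moment bound on $U_0$ together with the fact that $L^{-1}\Ex[\mu]{U_0 \otimes U_0} \to \Sigma$ at an explicit polynomial rate (a consequence of the decay of correlations noted in Remark~\ref{rk:fcb_implies_doc}). This step contributes an error polynomial in $k^{-1}$ and $L$ whose precise shape depends on $r$.

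Finally, I would combine the three error contributions---of order $\sqrt{\ell/m}$ (small blocks), $n^{-1/2} k \ell^{-\gamma}$ (decoupling), and the i.i.d.\ invariance-principle error---and optimize over $L \sim n^a$, $\ell \sim n^b$ subject to $a > b$. The explicit exponent $\kappa(\gamma,r)$ in the statement should emerge from this three-way balance. I expect the decoupling to be the main obstacle: tracking the separately H\"older seminorms of the test function $G$ through the iterated application of the Functional Correlation Bound, and upgrading from the $\wass_1$ bound supplied by Kantorovich--Rubinstein to $\wass_r$ via interpolation against $L^r$ moment bounds, both demand careful bookkeeping and ultimately constrain the range $r < 2\gamma$.
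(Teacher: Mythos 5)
Your overall architecture---Bernstein's big/small block splitting, decoupling of the big blocks via the Functional Correlation Bound and Kantorovich--Rubinstein duality, a G\"otze--Za\u{\i}tsev-type strong approximation for the decoupled i.i.d.\ block sums, and interpolation from $\wass_1$ to $\wass_r$---is precisely what the paper does, so the route is the same. The genuine flaw is your stated decoupling error $n^{-1/2}k\ell^{-\gamma}$, which is too small by a factor of $n$. Each application of~\eqref{eq:fcb_bd} charges a cost proportional to $\norminf{G}+\sum_i\sdsemi{G}{i}$ for the separately-H\"older function $G$ realising the $1$-Lipschitz test functional composed with the (rescaled) block-sum map. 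There are $kp\approx n$ big-block coordinates, each contributing $n^{-1/2}\dsemi{v}$ to the seminorm sum, so $\sum_i\sdsemi{G}{i}\approx n^{1/2}\dsemi{v}$; similarly $\norminf{G}\le\Lip(f)\,kp\,n^{-1/2}\norminf{v}\approx n^{1/2}$. Peeling off $k-1$ blocks at separation $\ell= n^b$ therefore costs $k\ell^{-\gamma}\cdot n^{1/2}\approx n^{3/2-a-b\gamma}$, which is exactly what Lemma~\ref{lemma:big_block_indep_approx} gives; you are implicitly charging only the seminorm of a single coordinate. Optimizing with your (too-optimistic) exponent $n^{1/2-a-b\gamma}$ would yield a $\kappa$ strictly larger than the one in the theorem, i.e.\ a false conclusion.

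Two smaller points. The remainder after the $k$-th big/small pair has length up to $p+q\approx n^a$, far longer than a small block of length $n^b$, so it cannot simply be ``treated as a final small block''; the paper bounds it separately (Lemma~\ref{lemma:blocking_step_error}), producing the term $n^{(1-a)(1-\gamma)/(2\gamma)}$, which turns out to be the binding constraint in the final optimization and is not captured by $\sqrt{\ell/m}=n^{(b-a)/2}$. And for the $\wass_1\to\wass_r$ upgrade, the processes $\stilde{W}_n$ (on $(M,\mu)$) and $\shat{W}_n$ (on an auxiliary product space) do not live on a common probability space, so ``an $L^r$ moment bound on $\shat{W}_n-\stilde{W}_n$'' has no meaning; the correct step is the interpolation inequality $\wass_r(X_1,X_2)\le\wass_1(X_1,X_2)^\theta(\wass_s(X_1,0)+\wass_s(X_2,0))^{1-\theta}$ of Lemma~\ref{lem:wasserstein_interpolation}, applied with $s=2\gamma$ and $L^{2\gamma}$ bounds on each process separately, which is precisely what forces the restriction $r<2\gamma$.
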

\begin{remark}
	(i) This result yields the bound $\pi(W_n,W)=O\big(n^{-\frac{r}{r+1}\kappa(\gamma,r)}\big)$ in the Prokhorov metric. Indeed, $\pi(\mu,\nu)\le \wass_r(\mu,\nu)^{\frac{r}{r+1}}$ for any $r\in [1,\infty)$ and any probability measures $\mu$, $\nu$ on $D=D([0,1],\R^d)$.

    (ii) This result also gives a convergence rate in the central limit theorem in the Wasserstein-$r$ metric: by considering the projection $x\in D\mapsto x(1)$, we obtain \[\wass_r(n^{-\frac12}S_n,W(1))\le \wass_r(W_n,W).\]

    (iii) For probability measures $\mu$, $\nu$ on $\R$ such that $\nu$ is absolutely continuous with density $g$, the Kolmogorov metric $d_K$ satisfies $d_K(\mu,\nu)\le (1+\norminf{g})\pi(\mu,\nu)$ (see~\cite{gibbs_su}). Hence for $d=1$, we obtain the (suboptimal) Berry-Esseen type bound $d_K(n^{-\frac12}S_n,W(1))=O\big(n^{-\frac{r}{r+1}\kappa(\gamma,r)}\big)$.
\end{remark}
Note that in particular $\lim_{\gamma\to\infty}\kappa(\gamma,r)=\frac{1}{4}$ for all $r\in (1,\infty)$. The exact formula for $ \kappa$ could possibly be improved by more careful arguments. However, since our proof is based on Bernstein's `big block-small block' technique, it does not seem possible to obtain a rate better than $ O(n^{-1/4}) $.

As mentioned above, if $T:M\to M$ is a mixing nonuniformly hyperbolic map modelled by a Young tower with $O(n^{-(\gamma+1)})$ tails, then $T$ satisfies the Functional Correlation Bound with rate $n^{-\gamma}$ by~\cite[Theorem 2.3]{fleming2022}. Combining this with Theorem~\ref{thm:rates_wip_lip} yields the following corollary:
\begin{corol}\label{corol:wip_rates_for_nuh}
    Let $T:M\to M$ be a mixing transformation modelled by a Young tower with $O(n^{-(\gamma+1)})$ tails, where $\gamma>1$. Let $v\in\holsp(M,\R^d) $ with mean zero and let $1\le r<2\gamma$. Then there is a constant $ C>0 $ such that $ \wass_r(W_n,W)\le Cn^{-\kappa(\gamma,r)} $ for all $ n\ge 1 $.
\end{corol}
The rates given in Examples~\ref{exa:lsv_and_baker}-\ref{exa:flowers} follow immediately from Corollary~\ref{corol:wip_rates_for_nuh}. Indeed, the LSV map $T$ and intermittent baker's map $S$ from Example~\ref{exa:lsv_and_baker} are modelled by Young towers with $O(n^{-1/\alpha})$ tails~\cite{young1999recurrence,melbourne_varandas_2016note}.
By~\cite{eslami_liverani}, Example~\ref{exa:eslami_liverani} is modelled by a Young tower with $O(n^{-4})$ tails.
Finally, the rate given in Example~\ref{exa:flowers} for Bunimovich flowers follows from  the fact that the billiard map is modelled by a Young tower with $O(n^{-3})$ tails~\cite{balint_komalovics}. More precisely, Chernov \& Zhang~\cite{chernovzhang2005nonlin} showed that the map is modelled by a Young tower with $O(n^{-3} (\log n)^3)$ tails and B\'alint \& Kom\'alovics~\cite{balint_komalovics} recently removed the logarithmic factor.
	\section{Preliminaries}\label{sec:prelims}
	\subsection{Probabilistic preliminaries}
	Throughout this subsection, we fix an integer $d\ge 1$. We will need the following basic interpolation inequality for the Wasserstein metric:
	\begin{lemma}\label{lem:wasserstein_interpolation}
	Let $1\le r< s$. Let $X_1,X_2$ be random elements of $D=D([0,1],\R^d)$ with laws in $ \mathcal{P}_r^s(D)$. Let $\theta = \frac{1/r-1/s}{1-1/s}$. Then 
	\[ \wass_r(X_1,X_2) \le \wass_1(X_1,X_2)^{\theta}(\wass_s(X_1,0)+\wass_s(X_2,0))^{1-\theta}. \]
\end{lemma}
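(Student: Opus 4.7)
The plan is to prove this via a straightforward H\"older interpolation argument. First, I would choose an almost-optimal coupling: for any $\eps>0$, pick $\sigma\in\Pi(\mu_1,\mu_2)$ (where $\mu_i$ is the law of $X_i$) such that $\int \norminf{x-y}\,d\sigma(x,y)\le \wass_1(X_1,X_2)+\eps$. By a slight abuse of notation, denote the resulting pair by $(X_1,X_2)$, so that its marginals are correct and $\Ex{\norminf{X_1-X_2}}\le \wass_1(X_1,X_2)+\eps$.

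Next, the key step is H\"older's inequality. Write
\[ \norminf{X_1-X_2}^r = \norminf{X_1-X_2}^{r\theta}\cdot \norminf{X_1-X_2}^{r(1-\theta)}, \]
and apply H\"older with exponents $p=1/(r\theta)$ and $q=s/(r(1-\theta))$. The definition $\theta = (1/r-1/s)/(1-1/s)$ is precisely what is needed for $1/p+1/q = r\theta + r(1-\theta)/s = 1$. This yields
\[ \Ex{\norminf{X_1-X_2}^r}^{1/r} \le \Ex{\norminf{X_1-X_2}}^{\theta}\,\Ex{\norminf{X_1-X_2}^s}^{(1-\theta)/s}. \]

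To control the $L^s$-factor independently of the coupling, I would use Minkowski's inequality: since the marginals of $\sigma$ are the laws of $X_1$ and $X_2$,
\[ \Ex{\norminf{X_1-X_2}^s}^{1/s} \le \Ex{\norminf{X_1}^s}^{1/s}+\Ex{\norminf{X_2}^s}^{1/s} = \wass_s(X_1,0)+\wass_s(X_2,0), \]
where the final equality uses that the only coupling with a Dirac mass at $0$ is the trivial one. Combining the two displays and using that $\wass_r(X_1,X_2)\le \Ex{\norminf{X_1-X_2}^r}^{1/r}$ (as $\sigma\in\Pi(\mu_1,\mu_2)$) gives
\[ \wass_r(X_1,X_2) \le (\wass_1(X_1,X_2)+\eps)^{\theta}\,(\wass_s(X_1,0)+\wass_s(X_2,0))^{1-\theta}. \]
Letting $\eps\to 0$ yields the desired inequality.

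There is no serious obstacle here; the argument is entirely routine. The only points requiring care are verifying the arithmetic identity $r\theta+r(1-\theta)/s=1$ that forces the choice of $\theta$, and working with near-optimal couplings (rather than an attained optimum) to avoid any measure-theoretic issues with the existence of optimal couplings on $D$ under the supremum norm.
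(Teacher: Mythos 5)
Your proof is correct and takes essentially the same approach as the paper: pick a near-$\wass_1$-optimal coupling, apply the $L^p$ interpolation inequality (which you unpack as a H\"older argument, precisely how it is proved) to $\norminf{X_1-X_2}$, bound the $L^s$ factor by Minkowski's inequality and the identity $\wass_s(X_i,0)=\Ex{\norminf{X_i}^s}^{1/s}$, then let $\eps\to 0$.
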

\begin{proof}
	For any $\eps>0$ we can choose random elements $\tilde X_1, \tilde X_2 \in D$ defined on the same probability space such that $\tilde X_i \eqd X_i $ and $\big||\tilde X_1-\tilde X_2|_\infty\big|_1\le \wass_1(X_1,X_2)+\eps$. Note that $\big||\tilde X_i|_\infty\big|_s=\wass_s(X_i,0)$. Now by the classical $L^p$ interpolation inequality, $\pnorm{u}_r \le \pnorm{u}_1^\theta \pnorm{u}_s^{1-\theta}$ for all $u\in L^s$. It follows that
	\begin{align*}
		\pnorm{|\tilde X_1-\tilde X_2|_\infty}_r \le (\wass_1(X_1,X_2)+\eps)^\theta (\wass_s(X_1,0)+\wass_s(X_2,0))^{1-\theta}.
	\end{align*}
	Since $\eps>0$ is arbitrary and $\wass_r(X_1,X_2)\le\big||\tilde X_1-\tilde X_2|_\infty\big|_r$, the result follows.
\end{proof}
We now recall some results on sequences of independent random vectors that are useful in the proof of our main theorem.
	\begin{lemma}\label{lemma:maximal_rosenthal}
		Let $p\ge 2$. There exists a constant $C>0$ such that for all $k\ge 1$, for all independent, mean zero $\R^d$-valued random vectors $\widehat{X}_1,\dots,\widehat{X}_{k}\in L^p$,
		\begin{equation*}
			\Ex{\biggl|\max_{1\le j\le k}\Big|\sum_{i=1}^{j} \widehat{X}_i\Big| \biggr|^p}\le C\left(\biggl(\sum_{i=1}^{k} \Ex{|\widehat{X}_i|^2}\biggr)^{p/2}+ \sum_{i=1}^{k} \Ex{|\widehat{X}_i|^p}\right).
		\end{equation*}  
	\end{lemma}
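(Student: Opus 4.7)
The plan is to combine Doob's $L^p$ maximal inequality with the classical scalar Rosenthal inequality. I begin by noting that, writing $S_j=\sum_{i=1}^j \widehat{X}_i$ and $\mathcal{F}_j=\sigma(\widehat{X}_1,\ldots,\widehat{X}_j)$, the sequence $(S_j)_{j\ge 1}$ is an $\R^d$-valued $(\mathcal{F}_j)$-martingale. Since $x\mapsto|x|$ is convex on $\R^d$, the real-valued process $(|S_j|)_{j=1}^k$ is a nonnegative submartingale, and Doob's $L^p$ maximal inequality (applicable since $p\ge 2>1$) gives
\[ \Ex{\Big|\max_{1\le j\le k}|S_j|\Big|^p} \le \left(\frac{p}{p-1}\right)^p \Ex{|S_k|^p}. \]
It therefore suffices to bound $\Ex{|S_k|^p}$ by a constant multiple of the right-hand side of the claimed inequality.

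The next step is to reduce the vector-valued moment bound to a one-dimensional estimate coordinate by coordinate. Writing $\widehat{X}_i=(\widehat{X}_i^{(1)},\ldots,\widehat{X}_i^{(d)})$ and recalling that $|y|=\sum_{l=1}^d|y_l|$, Jensen's inequality in the form $\big(\sum_{l=1}^d a_l\big)^p\le d^{p-1}\sum_{l=1}^d a_l^p$ gives
\[ \Ex{|S_k|^p} \le d^{p-1}\sum_{l=1}^d \Ex{\Big|\sum_{i=1}^k \widehat{X}_i^{(l)}\Big|^p}. \]
For each coordinate $l$, the scalars $(\widehat{X}_i^{(l)})_{i=1}^k$ are independent, mean zero, and in $L^p$, so the classical scalar Rosenthal inequality (see, for instance, Petrov, \emph{Limit theorems of probability theory}, Ch.~2) provides a constant $C_p>0$ with
\[ \Ex{\Big|\sum_{i=1}^k \widehat{X}_i^{(l)}\Big|^p} \le C_p\left(\bigg(\sum_{i=1}^k\Ex{|\widehat{X}_i^{(l)}|^2}\bigg)^{p/2}+\sum_{i=1}^k\Ex{|\widehat{X}_i^{(l)}|^p}\right). \]

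To finish, I would use the pointwise domination $|\widehat{X}_i^{(l)}|\le |\widehat{X}_i|$, so that every coordinate-wise moment on the right is controlled by the corresponding full-vector moment; summing over $l\in\{1,\ldots,d\}$ absorbs the extra factor of $d$ into the overall constant. The only real obstacle is the bookkeeping of constants, namely that the final constant depends solely on $p$ and on the (fixed) ambient dimension $d$; if a $d$-free bound were desired one could instead invoke a Hilbert-space version of Rosenthal directly, but this is not needed for the application in Section~\ref{sec:proof_main_result}.
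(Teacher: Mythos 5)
Your proof is correct and follows essentially the same route as the paper: apply Doob's $L^p$ maximal inequality to remove the running maximum, then invoke the scalar Rosenthal inequality after reducing to one dimension. The only cosmetic difference is that you apply Doob to the $\R^d$-valued submartingale $|S_j|$ first and reduce to coordinates afterward, whereas the paper reduces to $d=1$ at the outset; both orderings are fine.
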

	\begin{proof}
		By working componentwise, without loss of generality we may assume that $ d = 1 $. Define $ M(t)=\sum_{i=1}^{[kt]}\shat{X}_i $. Then $ M $ is a martingale. Hence by Doob's maximal inequality, \[ \pnorm{\max_{1\le j\le k}\bigg|\sum_{i=1}^{j} \widehat{X}_i\bigg|}_p=\pnorm{\sup_{0\le t\le 1}|M(t)|}_p\le \frac{p}{p-1}\pnorm{\sum_{i=1}^k \widehat{X}_i}_p. \]
		The result follows by Rosenthal's inequality~\cite{rosenthal1970subspaces}.
	\end{proof}
	We will also need the following strong invariance principle for iid random vectors, which is due to G\"otze and Za\u{\i}tsev~\cite[Theorem 3]{gotze_zaitsev_multidim_invariance}.
	\begin{lemma}\label{lemma:gotze_zaitsev}Let $ p\in(2,\infty) $. Then there exists a constant $ C>0 $ such that the following holds. Let $ (\shat X_i)_{i=1}^k $ be iid mean zero $\R^d$-valued random vectors in $ L^p $. Then there exists a probability space supporting random vectors $ (\widetilde X_i)_{i=1}^k $ with the same joint distribution as $ (\widehat X_i)_{i=1}^k $ and iid Gaussian random vectors $ (Z_i)_{i=1}^k $ with mean zero and covariance $V = k{\mathbb{E}[\widehat X_1 \otimes \widehat X_1]}$ such that
		\[ \pnorm{\max_{1\le j \le k}\bigg|\sum_{i=1}^j \widetilde X_i-\sum_{i=1}^j Z_i  \bigg|}_p\le Ck^{1/p}\big|\shat X_1\big|_p\frac{\sigma_{\max}(V)}{\sigma_{\min}(V)}, \]
		where $ \sigma^2_{\min}(V) $ and $ \sigma^2_{\max}(V) $ are the minimal and maximal positive eigenvalues of $V$, respectively. \qed
	\end{lemma}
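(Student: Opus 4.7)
The plan is to construct the coupling dyadically in the spirit of the Koml\'os--Major--Tusn\'ady (KMT) scheme, following Za\u{\i}tsev's multidimensional strong approximation technique. Assume first that $k=2^L$ is a power of two; the general case reduces to this by padding with zero vectors. I would build a binary tree of partial sums: at level $\ell$, define the block sums $B^{(\ell)}_j = \sum_{i\in I^{(\ell)}_j}\shat X_i$ over dyadic intervals $I^{(\ell)}_j$ of length $2^{L-\ell}$. On the Gaussian side, introduce independent Gaussians $G^{(\ell)}_j$ with the same covariance structure as the $B^{(\ell)}_j$. The coupling is then assembled recursively from the root of the tree: at each split one couples $B^{(\ell)}_j$ to $G^{(\ell)}_j$ \emph{conditionally} on the pair of already coupled parents, so that the partial sums match exactly at the coarser scales and only a small error is introduced inside each subdivision.

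The workhorse is a one-step quantitative coupling: given $S=\sum_{i=1}^m\shat X_i$ with covariance $V_m = m\,\E[\shat X_1\otimes \shat X_1]$, construct a Gaussian $Z\sim N(0,V_m)$ on the same probability space such that $\pnorm{S-Z}_p$ is controlled by $m^{1/p}\pnorm{\shat X_1}_p$ times a factor depending only on the condition number $\sigma_{\max}(V)/\sigma_{\min}(V)$. This is the multivariate analogue of Sakhanenko's inequality. I would obtain it by passing to the coordinate system in which $V_m$ is a multiple of the identity, then using a smoothing and characteristic function estimate to control the total variation between the conditional law of $S$ (given the coarser-level sum it must add up to) and the corresponding Gaussian conditional law; a quantile/Strassen transport then produces the coupling. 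The condition number enters through the Jacobian of the whitening transformation.

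Equipped with the one-step coupling, I would iterate it along the dyadic tree from root to leaves. At level $\ell$ there are $2^\ell$ independent blocks of size $m=2^{L-\ell}$, so conditionally the $L^p$ error from the refinement at level $\ell$ contributes (via a Rosenthal-type inequality as in Lemma~\ref{lemma:maximal_rosenthal}) at most
\begin{equation*}
2^{\ell/p}\cdot(2^{L-\ell})^{1/p}\pnorm{\shat X_1}_p\,\frac{\sigma_{\max}(V)}{\sigma_{\min}(V)}
\;=\;k^{1/p}\pnorm{\shat X_1}_p\,\frac{\sigma_{\max}(V)}{\sigma_{\min}(V)}
\end{equation*}
to the maximal discrepancy, and summing the $L+1=O(\log k)$ level contributions — absorbing the logarithmic factor by reweighting the tolerance at each level geometrically, a standard KMT trick — yields the stated bound.

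The main obstacle is the one-step multivariate coupling: in dimension $d\ge 2$ the monotone quantile transform used in the univariate KMT argument has no direct analogue, and naive Skorokhod embedding is too lossy. Za\u{\i}tsev's argument circumvents this via a careful smoothing scheme for characteristic functions together with an explicit transport construction whose quantitative loss is exactly the $\sigma_{\max}/\sigma_{\min}$ factor. Retaining \emph{polynomial} (i.e.\ $L^p$) moment control, rather than the cleaner exponential moments, is the most delicate point and is what makes the inequality applicable in the present setting.
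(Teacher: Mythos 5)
This lemma is not proved in the paper: it is cited verbatim as Theorem~3 of G\"otze and Za\u{\i}tsev~\cite{gotze_zaitsev_multidim_invariance}, and the \(\square\) after the statement signals exactly that. So there is no ``paper's own proof'' to compare against; what one should check is only that the statement has been transcribed correctly, which it has.

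Your sketch does describe, at a high level, the mechanism by which results of this type are actually obtained (dyadic KMT tree, refinement coupling conditional on coarser sums, a one-step multivariate approximation as the engine), so the architecture is the right one. However, as a proof it has real gaps. First, the ``one-step quantitative coupling'' you invoke --- a multivariate Sakhanenko-type inequality giving $\pnorm{S-Z}_p\lesssim m^{1/p}\pnorm{\shat X_1}_p\cdot\sigma_{\max}/\sigma_{\min}$ under polynomial moments --- is not a routine smoothing-and-transport lemma; it is the technical heart of Za\u{\i}tsev's work, and simply asserting it defers essentially all of the difficulty. Second, the level-by-level bookkeeping does not close as written. You compute a per-level contribution of $2^{\ell/p}(2^{L-\ell})^{1/p}=k^{1/p}$ by combining $2^\ell$ blocks via the $\bigl(\sum\pnorm{Y_i}_p^p\bigr)^{1/p}$ term of Rosenthal, but Rosenthal also carries the term $\bigl(\sum\pnorm{Y_i}_2^2\bigr)^{1/2}$, which for $p>2$ dominates at the fine levels unless the one-step coupling also gives much better $L^2$ control than $m^{1/p}$; you have not claimed that. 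Third, the statement ``absorbing the logarithmic factor by reweighting the tolerance at each level geometrically'' is the one nontrivial trick you would genuinely need, and with only $L^p$ moments (no exponential tails) this geometric reweighting is not free --- it is precisely where the polynomial-moment version becomes delicate, as you yourself note at the end, so it cannot be waved away in one clause. Since the intent in the paper is simply to quote the black box, the correct ``proof'' here is the citation; a genuine reproof along the lines you sketch would require filling in the one-step coupling and the level summation in full.
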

	
	We now use this lemma to prove a weak invariance principle with rates for iid random vectors. Define $ Y_k \in D([0,1],\R^d) $ by $ Y_k(t)=\sum_{i=0}^{[kt]}\shat X_i $ for $ t\in [0,1] $.
	\begin{corol}\label{corol:wip_rates_iid}
		Let $ p\in(2,\infty) $. Then there exists $ C>0 $ such that for all $ k\ge 1 $, for any sequence of iid mean zero $\R^d$-valued random vectors $ (\shat X_i)_{i=1}^k $ in $ L^p $ we have 
		\[ \wass_p(Y_k, W)\le Ck^{1/p}\Big(\big|\shat X_1\big|_p\frac{\sigma_{\max}(V)}{\sigma_{\min}(V)}+|\Ex{\shat X_1 \otimes \shat X_1}^{1/2}|\Big), \]
		where $ W $ is a Brownian motion with covariance $ V=k\mathbb{E}[\shat X_1 \otimes \shat X_1] $.
	\end{corol}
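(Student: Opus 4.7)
The plan is to upgrade the discrete $L^p$ coupling from Lemma~\ref{lemma:gotze_zaitsev} into a coupling at the process level between $Y_k$ and a Brownian motion $\widetilde W$ of covariance $V$. Since $Y_k$ is piecewise constant on the grid $\{i/k : 0\le i\le k\}$, the supremum distance $\norminf{Y_k-\widetilde W}$ splits naturally into (a) a discrepancy at the grid points, which Lemma~\ref{lemma:gotze_zaitsev} controls, and (b) the oscillation of $\widetilde W$ on each interval of length $1/k$, which can be handled by Brownian scaling together with Doob's maximal inequality.

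First I would apply Lemma~\ref{lemma:gotze_zaitsev} to obtain, on a common probability space, random vectors $(\widetilde X_i)_{i=1}^k$ with the same joint law as $(\widehat X_i)_{i=1}^k$ and iid Gaussians $(Z_i)_{i=1}^k$ whose partial sums $\sum_{j=1}^i Z_j$ have the covariances of a Brownian motion of covariance $V$ evaluated at times $i/k$. Enlarging the probability space if necessary, I would then introduce an auxiliary Brownian motion of covariance $V$ independent of the data and use it to insert independent Brownian bridges between the grid values $\sum_{j=1}^i Z_j$. This produces $\widetilde W\in D([0,1],\R^d)$ which is a genuine Brownian motion of covariance $V$ satisfying $\widetilde W(i/k)=\sum_{j=1}^i Z_j$ for all $0\le i\le k$.

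Since $Y_k(t)=\sum_{i=1}^j\widetilde X_i$ for $t\in[j/k,(j+1)/k)$, the triangle inequality gives
\begin{align*}
\norminf{Y_k-\widetilde W}
&\le \max_{0\le j\le k}\Big|\sum_{i=1}^j \widetilde X_i-\sum_{i=1}^j Z_i\Big|\\
&\quad+\max_{0\le j\le k-1}\sup_{t\in[j/k,(j+1)/k]}|\widetilde W(t)-\widetilde W(j/k)|.
\end{align*}
The $L^p$-norm of the first summand is precisely what Lemma~\ref{lemma:gotze_zaitsev} bounds, yielding a term of order $k^{1/p}\pnorm{\widehat X_1}_p\,\sigma_{\max}(V)/\sigma_{\min}(V)$. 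For the second summand, oscillations of $\widetilde W$ on disjoint intervals of length $1/k$ are iid, and by stationarity and Brownian scaling a single oscillation has the distribution of $\sup_{0\le u\le 1}|V_0^{1/2}B(u)|$, where $B$ is a standard Brownian motion in $\R^d$ and $V_0=\Ex{\widehat X_1\otimes\widehat X_1}$. Doob's maximal inequality applied componentwise bounds the $p$-th moment of one such oscillation by a constant multiple of $|\Ex{\widehat X_1\otimes\widehat X_1}^{1/2}|^p$, and the crude $\ell^p$-bound $\max_j F_j\le (\sum_j F_j^p)^{1/p}$ over the $k$ intervals contributes a further $k^{1/p}$, giving an $L^p$-bound of order $k^{1/p}|\Ex{\widehat X_1\otimes\widehat X_1}^{1/2}|$.

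The main subtlety will be the coupling construction: one has to verify that the interpolated process $\widetilde W$ is genuinely Brownian (not merely matching the target law at grid points) and to track carefully how the normalization $V=k\,\Ex{\widehat X_1\otimes\widehat X_1}$ enters the Brownian-scaling step at the correct power of $k$. Once these bookkeeping points are settled, the corollary is immediate from the coupling bound $\wass_p(Y_k,W)\le \pnorm{\norminf{Y_k-\widetilde W}}_p$ combined with the triangle inequality on the $L^p$-norms of the two summands above.
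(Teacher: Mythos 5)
Your proposal is correct and follows the same overall architecture as the paper's proof — couple via the G\"otze--Za\u{\i}tsev lemma, then control the discrepancy between the piecewise-constant Gaussian partial-sum process and the continuous-time Brownian motion — but the second step is handled differently. The paper introduces $\tilde A_k(t)=W([kt]/k)$, writes $W=V^{1/2}B$ with $B$ a standard Brownian motion, and bounds $\sup_{0\le t\le 1}|B(t)-B([kt]/k)|$ in $L^p$ by the $(\tfrac12-\tfrac1p)$-H\"older modulus of continuity of $B$ (citing Revuz--Yor), yielding $|V^{1/2}|\cdot k^{-(1/2-1/p)}=k^{1/p}|\Ex{\shat X_1\otimes\shat X_1}^{1/2}|$. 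You instead exploit that the oscillations of $\widetilde W$ on the $k$ disjoint grid intervals are independent, use Brownian scaling to reduce each to $\sup_{0\le u\le 1}|V_0^{1/2}B(u)|$ (with $V_0=\Ex{\shat X_1\otimes\shat X_1}$), control its $L^p$ norm by Doob, and pick up the $k^{1/p}$ from the crude bound $\max_j F_j\le(\sum_j F_j^p)^{1/p}$; this is essentially the argument that underlies the H\"older-modulus estimate, made explicit for this dyadic grid. Your route also replaces the paper's Wasserstein triangle inequality through an intermediate process $A_k$ by a single explicit coupling obtained via Brownian-bridge interpolation through the grid values $\sum_{j\le i}Z_j$; this is valid provided (as you note) one verifies the standard fact that the interpolated process is a Brownian motion with covariance $V$, which requires the $Z_i$ to be iid $N(0,V/k)$ so that $\sum_{j\le i}Z_j$ has the law of $W(i/k)$. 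Both approaches give the same rate; the paper's is marginally shorter given the Revuz--Yor citation, while yours is more self-contained.
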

	\begin{proof}
		Define $ A_k,\, \widetilde Y_k \in D([0,1],\R^d) $ by $ A_k(t) = \sum_{i=1}^{[kt]}Z_i $ and $ \widetilde Y_k(t)=\sum_{i=1}^{[kt]}\widetilde X_i $. Then by Lemma~\ref{lemma:gotze_zaitsev},
		\[ \pnorm{\sup_{0\le t\le 1}|\widetilde Y_k(t)-A_k(t)|}_p=\pnorm{\max_{1\le j \le k}\bigg|\sum_{i=1}^j \widetilde X_i-\sum_{i=1}^j Z_i  \bigg|}_p\le Ck^{1/p}\big|\shat X_1\big|_p\frac{\sigma_{\max}(V)}{\sigma_{\min}(V)}. \]
		It follows that 
		\begin{equation}\label{eq:wass_Y_k-A_k_bd}
			\wass_p(Y_k,A_k)=\wass_p(\widetilde Y_k,A_k)\le Ck^{1/p}\big|\shat X_1\big|_p \, \sigma_{\max}(V)/\sigma_{\min}(V).
		\end{equation} 
		Let $ W $ be a Brownian motion with covariance $ V$. Then 
		\( (W(\frac{j}{k}) - W(\frac{j-1}{k}))_{1 \le j \le k} \eqd (Z_j)_{1 \le j \le k}.\)
		Define $ \tilde A_k\in D([0,1],\R^d) $ by 
		\[ \tilde A_k(t)=\sum_{i=1}^{[kt]}\Big(W\big(\tfrac{j}{k}\big)-W\big(\tfrac{j-1}{k}\big)\Big)=W\Big(\tfrac{[kt]}{k}\Big). \] 
		It follows that $ \tilde A_k \eqd A_k $. Write $ W=V^{1/2}B $, where $ B $ is a standard Brownian motion. Then 
		\begin{align*}
			|\tilde A(t)-W(t)|&=\left|V^{1/2}\left(B(t)-B\Big(\tfrac{[kt]}{k}\Big)\right)\right|\le k^{1/2}|\Ex{\shat X_1\otimes \shat X_1}^{1/2}|\left|B(t)-B\Big(\tfrac{[kt]}{k}\Big)\right|.
		\end{align*}
		Now by~\cite[Theorem 2.1 and p.\ 28]{revuz_yor}, for all $\alpha\in (0,1/2)$,
		\[ \pnorm{\sup_{0\le s<t\le 1}\frac{|B(t)-B(s)|}{|t-s|^{\alpha}}}_p<\infty. \]
		Since $ |t-\frac{1}{k}[kt]|\le \frac{1}{k} $, it follows that
		\[ \Bigg|\sup_{0\le t\le 1}\Big|B(t)-B\Big(\tfrac{[kt]}{k}\Big)\Big|\Bigg|_p\le k^{-(\frac{1}{2}-\frac{1}{p})}\bigg|\sup_{0\le s<t\le 1}\frac{|B(t)-B(s)|}{|t-s|^{\frac{1}{2}-\frac{1}{p}}}\bigg|_p \le Ck^{-(\frac{1}{2}-\frac{1}{p})}. \]
		Hence 
		\[ \wass_p(A_k, W)=\wass_p(\tilde A_k, W)\le Ck^{1/p}|\Ex{\shat X_1\otimes \shat X_1}^{1/2}|. \]
		The result follows by combining this bound with~\eqref{eq:wass_Y_k-A_k_bd}.
	\end{proof}
	\subsection{Consequences of the Functional Correlation Bound}
	Throughout this subsection, we fix $\gamma>1$ and
	assume that $T:M\to M$ satisfies the Functional Correlation Bound with rate $n^{-\gamma}$. We first state a weak dependence lemma; in the proof of our main theorem we will use this lemma rather than applying the Functional Correlation Bound directly.
	
	Let $ e,q\ge 1 $ be integers. For $ G=(G_1,\dots,G_e):M^q\to \R^e $ and $ 0\le i<q $ we define $ [G]_{\eta,i}=\sum_{j=1}^e [G_j]_{\eta,i} $. We write $ G\in \sepdholsp{q}(M,\R^e) $ if $ \norminf{G}+\sum_{i=0}^{q-1}[G]_{\eta,i}<\infty $.
	
	Let $k\ge 1$ and consider $ k $ disjoint blocks of integers $\{\ell_i,\ell_i+1,\dots,u_i\}$, $1\le i\le k$ with $ \ell_i\le u_i<\ell_{i+1}. $ Consider $ \R^e $-valued random vectors $ X_i $ on $ (\ms,\mu) $ of the form
	\begin{equation*}
		X_i(x)=\varPhi_i(\tf^{\ell_i}x,\dots,\tf^{u_i}x)
	\end{equation*}
	where $ \varPhi_i\in \sepdholsp{u_i-\ell_i+1}(\ms,\R^e) $, $ 1\le i\le k. $
	
	When the gaps $\ell_{i+1}-u_i$ between blocks are large, the random vectors $ X_1,\dots,X_k $ are weakly dependent. Let $\widehat{X}_1,\dots,\widehat{X}_k$ be independent random vectors with $\widehat{X}_i{\eqd X_i}$.
	\begin{lemma}\label{lemma:weak_dependence}
		Let $R=\max_i \norminf{\varPhi_i}$. Then for all Lipschitz $ F:B(0,R)^k\to \R $,
		\begin{multline*}
			\big|\Ex[\mu]{F(X_1,\dots,X_{k})}-\Ex{F(\shat{X}_1,\dots,\shat{X}_{k})}\big|\\
			\le C\sum_{r=0}^{k-2}(\ell_{r+1}-u_r)^{-\gamma}\biggl(\norminf{F}+\Lip(F)\sum_{i=1}^{k}\sum_{j=0}^{u_i-\ell_i}\sdsemi{\varPhi_i}{j}\biggr),
		\end{multline*}
		where $C>0$ is the constant that appears in the Functional Correlation Bound~\eqref{eq:fcb_bd}.
	\end{lemma}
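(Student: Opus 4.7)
The plan is to encode $F(X_1,\dots,X_k)$ as a single separately H\"older observable evaluated along the orbit, and then apply the Functional Correlation Bound iteratively $k-1$ times, once for each gap between blocks.

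First I would package the blocks into a single function. Set $p_i = u_i-\ell_i+1$ and $q = \sum_i p_i$, and let $n_0<n_1<\cdots<n_{q-1}$ enumerate the integers $\ell_1,\dots,u_1,\ell_2,\dots,u_2,\dots,\ell_k,\dots,u_k$. Split an argument $z\in M^q$ into blocks $z^{(1)},\dots,z^{(k)}$ with $z^{(i)}\in M^{p_i}$, and define $G\colon M^q\to\R$ by
\[ G(z^{(1)},\dots,z^{(k)}) = F\bigl(\varPhi_1(z^{(1)}),\dots,\varPhi_k(z^{(k)})\bigr). \]
Because each $\varPhi_i$ takes values in $B(0,R)$, $G$ is well-defined, and by construction $F(X_1,\dots,X_k)(x) = G(T^{n_0}x,\dots,T^{n_{q-1}}x)$. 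Using the Lipschitz bound on $F$ (with respect to the $\ell^1$-sum of block norms) and the fact that varying a single coordinate of $z$ only varies one $\varPhi_i$, one gets $\norminf{G}\le\norminf{F}$ and, for the index $j$ sitting in block $i$,
\[ \sdsemi{G}{j} \le \Lip(F)\,\sdsemi{\varPhi_i}{j-(\ell_i-\ell_1)}. \]
Summing gives $\sum_{j=0}^{q-1}\sdsemi{G}{j} \le \Lip(F)\sum_{i=1}^k\sum_{j=0}^{u_i-\ell_i}\sdsemi{\varPhi_i}{j}$.

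Next I would peel off one block at a time via the Functional Correlation Bound~\eqref{eq:fcb_bd}. Applying it at the split between block $1$ and blocks $2,\dots,k$, which corresponds to the gap $n_{p_1}-n_{p_1-1}=\ell_2-u_1$, replaces $\int G(T^{n_0}x,\dots,T^{n_{q-1}}x)\,d\mu(x)$ by the doubled integral at a cost of
\[ C(\ell_2-u_1)^{-\gamma}\bigl(\norminf{G}+\textstyle\sum_{j=0}^{q-1}\sdsemi{G}{j}\bigr). \]
Carrying out the $x_0$-integration produces the reduced function $\bar G(z^{(2)},\dots,z^{(k)})=\int G(T^{n_0}x_0,\dots,T^{n_{p_1-1}}x_0,z^{(2)},\dots,z^{(k)})\,d\mu(x_0)$ on $M^{q-p_1}$. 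Crucially, $\norminf{\bar G}\le\norminf{G}$ and $\sdsemi{\bar G}{j}\le\sdsemi{G}{j+p_1}$ for each remaining coordinate, because integration in $x_0$ only affects variables in block $1$. Thus $\bar G$ still satisfies the hypothesis of the Functional Correlation Bound, with the same quantitative bounds in terms of $F$ and the $\varPhi_i$. Iterate: at step $r$ (for $r=1,\dots,k-1$) apply \eqref{eq:fcb_bd} to the current reduced function to peel off block $r+1$, picking up an error of at most $C(\ell_{r+1}-u_r)^{-\gamma}(\norminf{F}+\Lip(F)\sum_{i,j}\sdsemi{\varPhi_i}{j})$, and then integrate out that block to produce the next reduced function, whose $\norminf{\cdot}$ and H\"older seminorms remain controlled by those of $G$.

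Summing the $k-1$ telescoping errors gives exactly the claimed bound, since after all $k-1$ applications the surviving integral is $\Ex{F(\shat X_1,\dots,\shat X_k)}$. The only mildly delicate point, which I would verify carefully, is the bookkeeping for the separately-H\"older norm through partial integration: one needs that integrating out variables corresponding to already-decoupled blocks preserves the Hölder constants in the remaining coordinates, which follows because the integration and the variation are in disjoint coordinates and can thus be interchanged.
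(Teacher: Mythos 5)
Your argument is correct, and it is the same iterated-decoupling argument that the paper invokes by citing \cite[Lemma~4.1]{fleming2022}: package $F(X_1,\dots,X_k)$ as a single separately H\"older function $G$ of the orbit, apply the Functional Correlation Bound once per gap to peel off one block at a time, use that integrating out already-decoupled blocks does not increase $\norminf{\cdot}$ or the remaining separately-H\"older seminorms, and sum the $k-1$ errors. One small bookkeeping slip: since the blocks are not contiguous, the within-block offset is $j-\sum_{m<i}(u_m-\ell_m+1)$ rather than $j-(\ell_i-\ell_1)$; this does not affect the summed bound $\sum_{j}\sdsemi{G}{j}\le\Lip(F)\sum_{i}\sum_{j'}\sdsemi{\varPhi_i}{j'}$, which is all that is used.
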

	\begin{proof}
		This is proved for $e = 1$ in \cite[Lemma 4.1]{fleming2022}. The proof of the general case
		follows the same lines.
	\end{proof}
	Let $v\in \holsp(M,\R^d)$ be mean zero. Recall that we denote $S_n = \sum_{i=0}^{n-1}v\circ T^i$.
	\begin{lemma}\label{lem:moment_bds_arbitrary_set}
		There is a constant $C>0$ such that $\pnorm{\sum_{i\in I}v\circ T^i}_{2\gamma}\le C |I|^{1/2}$ for all finite sets $I\subset \Z_{\ge 0}$.
	\end{lemma}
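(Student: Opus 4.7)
The plan is to reduce to an integer moment bound and expand the moment as a multilinear sum, applying the Functional Correlation Bound recursively to each joint expectation.

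Since $|y|=\sum_{j=1}^d|y_j|$, by the triangle inequality I may work componentwise and assume $d=1$. By Jensen's inequality, $\norm{\cdot}_{2\gamma}\le\norm{\cdot}_p$ for any $p\ge 2\gamma$, so fixing an even integer $p\ge 2\gamma$ it suffices to prove
\begin{equation*}
\Ex{\Big(\sum_{i\in I}v\circ T^i\Big)^{\!p}}\le C|I|^{p/2}.
\end{equation*}
I would expand this as $\sum_{\mathbf{i}\in I^p}\Ex{\prod_{k=1}^p v\circ T^{i_k}}$ and, after sorting each tuple as $j_1\le\cdots\le j_p$, apply the FCB (Definition~\ref{def:fcb}) recursively to the test function $G(x_1,\ldots,x_p)=\prod_k v(x_k)$: at each stage, locate the largest remaining gap $j_{r+1}-j_r$ and split across it to approximate the joint expectation by a product of expectations over the two subsets, at cost $O((j_{r+1}-j_r)^{-\gamma}\dhnorm{v}^p)$. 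Iterating yields a decomposition of each joint expectation as a product of expectations over ``clusters'' of close indices, plus accumulated error terms that are products of gap factors raised to the power $-\gamma$.

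The mean-zero hypothesis is used crucially: any cluster consisting of a single index contributes a factor $\Ex{v\circ T^j}=0$, so only partitions in which every cluster has size at least two survive, forcing the number of clusters to be at most $p/2$. The number of nondecreasing tuples realizing such a partition is $O(|I|^{p/2})$, and each surviving expectation is uniformly bounded by $\norminf{v}^p$. The residual error terms sum to a constant because $\gamma>1$ implies that each single-gap sum $\sum_k k^{-\gamma}$ converges.

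The main obstacle is the combinatorial bookkeeping --- carefully tracking how the FCB errors compound under the iterated splitting and verifying that the sum over all surviving partitions and error contributions indeed totals $O(|I|^{p/2})$. Once this is handled, the lemma follows by combining the contributions over all surviving partitions of $\{1,\ldots,p\}$ into blocks of size at least two.
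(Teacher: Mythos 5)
Your approach --- expanding the $p$-th moment as a multilinear sum and applying the Functional Correlation Bound recursively --- is genuinely different from the paper's, which adapts a Rio-type inductive argument from \cite[Theorem 2.4(a) and Lemma 5.4]{fleming2022} (an induction on $|I|$ that splits the sum into two halves). Unfortunately your route has two gaps that I do not think can be repaired without essentially reinventing the inductive argument.

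First, the Jensen reduction to an even integer $p\ge 2\gamma$ already loses the theorem. The exponent $2\gamma$ in the lemma is sharp: a Functional Correlation Bound with rate $n^{-\gamma}$ gives $L^{2\gamma}$ moment bounds of Birkhoff-sum type, but not $L^p$ bounds for $p>2\gamma$. (Think of an LSV-type map with $\gamma=3/2$; then $2\gamma=3$ and the nearest even integer is $p=4$, for which $\|S_n\|_4\lesssim n^{1/2}$ genuinely fails.) Since $2\gamma$ is not an even integer in general, you cannot avoid $p>2\gamma$, and the bound you propose to prove is simply false in that regime.

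Second, even when $p=2\gamma$ happens to be an even integer, the combinatorial accounting does not close as cleanly as claimed. The assertion that ``the number of nondecreasing tuples realizing such a partition is $O(|I|^{p/2})$, and each surviving expectation is uniformly bounded by $\norminf{v}^p$'' is too coarse: with only a uniform bound on each cluster expectation, the count of tuples decomposing into $\le p/2$ clusters of size $\ge 2$ is of order $|I|^p$, not $|I|^{p/2}$ --- you must also exploit the decay of the cluster correlations in the gap variables to get the count down. Similarly, ``the residual error terms sum to a constant because $\gamma>1$'' is not enough. Naively summing the split error $(j_{r+1}-j_r)^{-\gamma}$ over all nondecreasing $p$-tuples in $I$ gives $O(|I|^{p-1})$, since for each fixed $r$ the sum over the gap converges while the other $p-1$ coordinates are free. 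Controlling this down to $O(|I|^{p/2})$ forces you to track that the split is taken at the \emph{largest} gap and to bound the surrounding cluster ranges accordingly; when one does this for $p=4$ the resulting sum is of order $|I|^{4-\gamma}$, which is $O(|I|^2)$ only for $\gamma\ge 2=p/2$. So the combinatorial argument needs roughly $p\le 2\gamma$ to work and is borderline at equality --- exactly where your even-integer reduction places you on the wrong side.

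The paper sidesteps all of this: it cites the inductive $L^{2\gamma}$ bound for the consecutive-index case from \cite{fleming2022} and observes that the same induction applies to an arbitrary finite $I\subset\Z_{\ge 0}$, since enumerating $I=\{k_0<\cdots<k_{n-1}\}$ and splitting on the index $i$ (rather than the time $k_i$) can only increase the gaps $k_{a_{2i+1}}-k_{a_{2i+1}-1}$ appearing in the FCB, hence only decrease the errors, because $n\mapsto n^{-\gamma}$ is decreasing. If you want a self-contained proof by your method, I would instead drop the even-integer reduction and prove the $L^{2\gamma}$ bound directly by the Peligrad--Utev/Rio dyadic induction, which handles fractional exponents naturally.
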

	\begin{proof}In the proof of~\cite[Theorem 2.4(a)]{fleming2022} an inductive argument is used to show that $\pnorm{\sum_{i=0}^{n-1}v\circ T^i}_{2\gamma}\le Cn^{1/2}$ for all $n\ge 1$.\footnote{In fact,~\cite{fleming2022} shows that a functional correlation bound for separately \textit{dynamically} H\"older functions implies analogous moment bounds for dynamically H\"older observables. It is easily checked that the same arguments apply with separately H\"older functions in place of separately dynamically H\"older ones; indeed, this is carried out in Section 3.5 of~\cite{my_thesis}.} The argument used can be adapted to prove this lemma by induction on $|I|$. Indeed, write $I = \{k_0,\dots,k_{n-1}\}$ where $k_0<\dots<k_{n-1}$ and replace the definition of $S_v(a,b)$ given in~\cite[Section 5]{fleming2022} by \( S_v(a,b) = \sum_{i=a}^{b-1} v\circ T^{k_i} \) for $0\le a<b\le n$. Then the statement of~\cite[Lemma 5.4]{fleming2022} remains valid since $n\mapsto n^{-\gamma}$ is monotonic and the gaps between the blocks $\{k_i:i\in \Z\cap[a_{2i},a_{2i+1})\}$ are no smaller than the gaps between the blocks $\{i: i \in \Z\cap[a_{2i},a_{2i+1})\}$. The remainder of the proof of~\cite[Theorem 2.4(a)]{fleming2022} goes through essentially unchanged.
	\end{proof}
	\begin{prop}\label{prop:maximal_ineq_birkhoff}
		There is a constant $ C>0 $ such that 
		$ \pnorm{\max_{1\le j\le n}|S_j|}_{2\gamma}\le Cn^{1/2}$ for all $n\ge 1$.
	\end{prop}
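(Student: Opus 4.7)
The plan is to derive the maximal inequality from Lemma~\ref{lem:moment_bds_arbitrary_set} via a dyadic decomposition of the partial sums that is log-free thanks to the hypothesis $\gamma>1$. Set $p=2\gamma$, and fix $K$ with $2^K\ge n$ and $2^K=O(n)$. For each $j\in\{1,\dots,n\}$, introduce the dyadic truncations $j_m = 2^m\lfloor j/2^m\rfloor$; then $j_0=j$, $j_K=0$, and consecutive differences satisfy $j_m-j_{m+1}\in\{0,2^m\}$. Telescoping yields
\[ S_j \;=\; \sum_{m=0}^{K-1}\bigl(S_{j_m}-S_{j_{m+1}}\bigr), \]
and each nonzero summand is a Birkhoff sum over a dyadic interval of length $2^m$ whose left endpoint is a multiple of $2^{m+1}$.

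The max over $j$ can then be controlled by a sum of maxima over dyadic blocks. I would define
\[ M_m \;=\; \max_{\ell\ge 0\,:\,(\ell+1)2^m\le n}\bigl|S_{(\ell+1)2^m}-S_{\ell 2^m}\bigr|, \]
so that $\max_{1\le j\le n}|S_j|\le \sum_{m=0}^{K-1}M_m$. To estimate $\pnorm{M_m}_p$, I would use the crude pointwise bound $M_m^p\le \sum_{\ell}|S_{(\ell+1)2^m}-S_{\ell 2^m}|^p$ together with Lemma~\ref{lem:moment_bds_arbitrary_set} applied to each interval (a finite set of size $2^m$), obtaining
\[ \E[M_m^p] \;\le\; \lfloor n/2^m\rfloor\cdot C(2^m)^{\gamma} \;\le\; Cn\cdot 2^{m(\gamma-1)}, \]
and hence $\pnorm{M_m}_p\ll n^{1/(2\gamma)}\,2^{m(\gamma-1)/(2\gamma)}$.

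Summing over $m$ and using that $\gamma>1$, so that the common ratio $2^{(\gamma-1)/(2\gamma)}$ exceeds $1$, the geometric series is dominated by its top term:
\[ \pnorm{\max_{1\le j\le n}|S_j|}_{2\gamma} \;\ll\; n^{1/(2\gamma)}\sum_{m=0}^{K-1}2^{m(\gamma-1)/(2\gamma)} \;\ll\; n^{1/(2\gamma)}\cdot 2^{K(\gamma-1)/(2\gamma)} \;\ll\; n^{1/2}. \]
The main obstacle to anticipate is avoiding a spurious $\log n$ factor: a naive Rademacher--M\'enshov or M\'oricz-type bound would introduce one. Here, however, the assumption $\gamma>1$ (equivalently $p=2\gamma>2$) makes the dyadic geometric series increase with $m$, so its top term controls the sum and yields the sharp $O(n^{1/2})$ rate with no logarithmic loss.
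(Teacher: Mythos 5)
Your proof is correct and ultimately encodes the same idea as the paper's, but is self-contained where the paper cites a reference. The paper's proof is two lines: it observes via Lemma~\ref{lem:moment_bds_arbitrary_set} that $\pnorm{S_{a+m}-S_a}_{2\gamma}\le Cm^{1/2}$ uniformly in $a$, and then invokes Serfling's maximal moment inequality~\cite[Corollary B.2]{serfling} to upgrade this to the bound on $\max_{j\le n}|S_j|$. Your dyadic chaining argument is precisely the proof of that Serfling-type inequality in the case at hand: writing $S_j=\sum_{m}(S_{j_m}-S_{j_{m+1}})$, bounding each level by the crude $\ell^p$-sum over blocks, and then noting that $\gamma>1$ (equivalently, the exponent $\gamma$ of the block length in $\E|S_{a+m}-S_a|^{2\gamma}\ll m^\gamma$ exceeds $1$) makes the geometric series $\sum_m 2^{m(\gamma-1)/(2\gamma)}$ top-heavy, so it is controlled by its last term and the logarithmic loss of a Rademacher--M\'enshov bound never appears. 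Nothing is gained or lost in strength; you have simply written out the proof of the maximal inequality that the paper defers to the literature, which is arguably worth doing for a reader who doesn't have Serfling's book to hand.
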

	\begin{proof}
		By Lemma~\ref{lem:moment_bds_arbitrary_set}, $\pnorm{S_{a+n}-S_a}_{2\gamma} = \pnorm{\sum_{i=a}^{n+a-1}v\circ T^i}_{2\gamma}\le Cn^{1/2}$ for all $n\ge 1$ and $a\ge 0$.
		The result follows by~\cite[Corollary B.2]{serfling}.
	\end{proof}
	\begin{lemma}\label{lem:var_S_n_error}The limit $\Sigma = \lim_{n\to \infty}n^{-1}\Ex[\mu]{S_n\otimes S_n}$ exists. Moreover, there is a constant $C>0$ such that $|\Ex[\mu]{S_n\otimes S_n} - n\Sigma|\le C(n^{2-\gamma}+\log n)$ for all $n\ge 1$. 
	\end{lemma}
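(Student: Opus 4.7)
The plan is to express $\mathbb{E}_\mu[S_n \otimes S_n]$ as a Cesàro-type average of correlation matrices, identify the limit as a convergent series, and then estimate the two error terms arising from truncation.

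First, using $T$-invariance of $\mu$, write
\begin{equation*}
\mathbb{E}_\mu[S_n \otimes S_n] = \sum_{i,j=0}^{n-1} \int_M (v\circ T^i)\otimes (v\circ T^j)\, d\mu = nC_0 + \sum_{k=1}^{n-1}(n-k)(C_k + C_k^T),
\end{equation*}
where $C_k = \int_M v\otimes (v\circ T^k)\, d\mu$ for $k\ge 0$. To control $C_k$, note that since $\int v\, d\mu = 0$, Remark~\ref{rk:fcb_implies_doc} applied componentwise (that is, to $G(x_0,x_1) = v_a(x_0)v_b(x_1)$ for each pair of components $1\le a,b\le d$) yields
\begin{equation*}
|C_k| \le C'\dhnorm{v}^2 k^{-\gamma}
\end{equation*}
for some constant $C'>0$ and all $k\ge 1$.

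Since $\gamma > 1$, the series $\Sigma := C_0 + \sum_{k=1}^\infty (C_k + C_k^T)$ converges absolutely. Combining the two displays above gives
\begin{equation*}
\mathbb{E}_\mu[S_n\otimes S_n] - n\Sigma = -\sum_{k=1}^{n-1} k\,(C_k + C_k^T) - n\sum_{k=n}^\infty (C_k + C_k^T).
\end{equation*}
The tail term is bounded by $Cn\sum_{k\ge n}k^{-\gamma} \le C'n^{2-\gamma}$, while the partial-sum term is bounded by
\begin{equation*}
C\sum_{k=1}^{n-1} k^{1-\gamma} \le C''(n^{2-\gamma} + \log n),
\end{equation*}
where we split into the cases $\gamma \in (1,2)$, $\gamma = 2$, and $\gamma > 2$ (in the last case the sum is $O(1)$, which is subsumed by $\log n$). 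Adding the two bounds gives the claimed estimate.

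There is no substantial obstacle here; the statement is essentially a standard consequence of summable decay of correlations, and the only point requiring care is passing from the scalar decay estimate in Remark~\ref{rk:fcb_implies_doc} to the matrix-valued estimate on $C_k$, which is handled by looking at each pair of components of $v$ separately.
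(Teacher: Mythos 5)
Your proof is correct but takes a genuinely different route from the paper. The paper's proof is a citation: it recasts $T$ as a trivial random dynamical system (base space a singleton), verifies assumptions (SA1)--(SA4) of Hella \& Stenlund, and then invokes their Theorem 4.1 and Lemma 4.4 to conclude. You instead give a self-contained Green--Kubo-type computation: expand $\Ex[\mu]{S_n\otimes S_n}$ into the autocorrelation matrices $C_k$ via $T$-invariance, apply Remark~\ref{rk:fcb_implies_doc} componentwise to get $|C_k|\ll k^{-\gamma}$, observe the series defining $\Sigma$ converges absolutely since $\gamma>1$, and estimate the two truncation errors $\sum_{k<n}k\,|C_k+C_k^T|$ and $n\sum_{k\ge n}|C_k+C_k^T|$ by elementary summation, with the case split at $\gamma=2$ producing the $n^{2-\gamma}+\log n$ bound. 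Your algebra and estimates check out (the decomposition $\Ex[\mu]{S_n\otimes S_n}-n\Sigma=-\sum_{k=1}^{n-1}k(C_k+C_k^T)-n\sum_{k\ge n}(C_k+C_k^T)$ is correct, and the $\log n$ term does absorb the $O(1)$ contribution when $\gamma>2$). What your approach buys is transparency and independence from the Hella--Stenlund machinery; what the paper's approach buys is brevity and consistency with a framework the author presumably already leans on elsewhere. Both are valid proofs of the statement.
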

	\begin{proof}
		We proceed by applying results from~\cite{hella_stenlund} so we need to check assumptions (SA1-SA4) of that paper. Consider the random dynamical system defined from $T:M\to M$ by taking the base space $\Omega$ to be a singleton. Then assumptions (SA2-4) are satisfied with the trivial bound $0$ for all $n$ since $T:M\to M$ is measure-preserving and ergodic. By Remark~\ref{rk:fcb_implies_doc}, (SA1) is satisfied with $\eta(n)=Cn^{-\gamma}$. The result follows by Theorem 4.1 and Lemma 4.4 from~\cite{hella_stenlund}.
	\end{proof}
	\section{Proof of the main result}\label{sec:proof_main_result}
	In this section, we prove Theorem~\ref{thm:rates_wip_lip}. The proof is based on Bernstein's classical `big block-small block' method. Let $ 0<b<a<1 $. We split $ \{0,\dots,n-1\} $ into alternating big blocks of length $ p=[n^a] $ and small blocks of length $ q=[n^b] $. Let $ k $ denote the number of big blocks, which is equal to the number of small blocks so $k = \big[\frac{n}{p+q}\big]$. The last remaining block has length at most $p+q$.
	
	Recall that $T:M\to M$ is a map satisfying the Functional Correlation Bound with rate $n^{-\gamma}$, $\gamma>1$, and that $v\in \holsp(M,\R^d)$ with mean zero.
	For $ n\ge 1 $, recall that we denote $ S_n=\sum_{r=0}^{n-1}v\circ T^r $. Let $ 1\le i\le k $ and $t\in [0,1]$. We define 
	\[ X_i = n^{-1/2}S_p\circ T^{(i-1)(p+q)}\quad \text{ and }\quad\stilde W_n(t)=\sum_{i=1}^{[kt]}X_i. \]
	Hence $ X_i $ is the sum of $ n^{-1/2}v\circ T^r $ over the $ i $th big block. We also define 
	\[ Y_i = n^{-1/2}S_q\circ T^{p+(i-1)(p+q)}\quad \text{ and }\quad R_n(t)=\sum_{i=1}^{[kt]}Y_i. \]
	Thus $ Y_i $ is the sum of $ n^{-1/2}v\circ T^r $ over the $ i $th small block.
    \begin{remark}\label{remark:chernov_clt}
           In~\cite[Section 7.8]{chernov_markarian} the `big block-small block' method is used to prove the central limit under a hypothesis on decay of multiple correlations, where the Functional Correlation Bound~\eqref{eq:fcb_bd} is
only assumed for functions $G : M^q \to \R$ of the form $G(x_0, \dots,x_{q-1}) = \prod_{i=0}^{q-1}v_i(x_i)$. Such a hypothesis is strong enough to control the characteristic function of $\widetilde{W}_n(1)$ and hence prove the central limit theorem. Assuming the Functional Correlation Bound enables us to control $\Ex[\mu]{F(\stilde{W}_n)}$ for any Lipschitz function $F:D([0,1],\R^d)\to \R$ (see the proof of Lemma~\ref{lemma:big_block_indep_approx}).     
    \end{remark}
\begin{lemma}\label{lemma:blocking_step_error}There is a constant $ C>0 $ such that 
		$ \wass_{2\gamma}(W_n, \stilde W_n+R_n)\le Cn^{(1-a)\frac{1-\gamma}{2\gamma}} $ for all $ n\ge 1 $.
	\end{lemma}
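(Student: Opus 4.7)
The strategy is to control $\sup_{t\in[0,1]}|W_n(t)-\stilde W_n(t)-R_n(t)|$ in $L^{2\gamma}(\mu)$ and then invoke the trivial bound $\wass_{2\gamma}(W_n,\stilde W_n+R_n)\le\pnorm{\sup_t|W_n(t)-\stilde W_n(t)-R_n(t)|}_{2\gamma}$ (coupling both processes on the underlying space $(\ms,\mu)$). The key observation is that
\[
(\stilde W_n+R_n)(t)=n^{-1/2}S_{[kt](p+q)},
\]
since the first $[kt]$ big blocks concatenated with the first $[kt]$ small blocks cover exactly the indices $0,1,\dots,[kt](p+q)-1$. Combined with $W_n(t)=n^{-1/2}S_{[nt]}$, this yields the pathwise identity $W_n(t)-\stilde W_n(t)-R_n(t)=n^{-1/2}(S_{[nt]}-S_{[kt](p+q)})$.

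Using $k(p+q)\le n<(k+1)(p+q)$, a short computation shows $0\le [nt]-[kt](p+q)\le 3(p+q)$ uniformly in $t\in[0,1]$: the worst case is $t=1$, where the gap equals $n-k(p+q)<p+q$, while for $t\in[i/k,(i+1)/k)$ with $0\le i\le k-1$ one has $[nt]<n(i+1)/k\le(i+3)(p+q)$, giving a gap of at most $3(p+q)$. Consequently,
\[
\sup_{t\in[0,1]}|W_n(t)-\stilde W_n(t)-R_n(t)|\le n^{-1/2}\max_{0\le i\le k}M_i,
\]
where $M_i=\max_{0\le j\le 3(p+q)}|S_{i(p+q)+j}-S_{i(p+q)}|=\bigl(\max_{0\le j\le 3(p+q)}|S_j|\bigr)\circ\tf^{i(p+q)}$. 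By $\tf$-invariance of $\mu$, each $M_i$ has the same distribution as $\max_{0\le j\le 3(p+q)}|S_j|$, so Proposition~\ref{prop:maximal_ineq_birkhoff} gives the uniform bound $\pnorm{M_i}_{2\gamma}\le C(p+q)^{1/2}$.

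To finish, I use the crude estimate $(\max_iM_i)^{2\gamma}\le\sum_iM_i^{2\gamma}$ together with $k+1\le 2n/(p+q)$ and $p+q\le 2n^{a}$:
\[
\pnorm{\sup_t|W_n(t)-\stilde W_n(t)-R_n(t)|}_{2\gamma}^{2\gamma}\le n^{-\gamma}(k+1)C(p+q)^{\gamma}\le Cn^{1-\gamma}(p+q)^{\gamma-1}\le Cn^{(1-a)(1-\gamma)}.
\]
Taking $2\gamma$-th roots yields $\wass_{2\gamma}(W_n,\stilde W_n+R_n)\le Cn^{(1-a)(1-\gamma)/(2\gamma)}$, which is the claimed bound. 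There is no substantive obstacle here: once Proposition~\ref{prop:maximal_ineq_birkhoff} is in hand the argument is essentially bookkeeping, with the only delicate point being the verification of the uniform $O(p+q)$ bound on $[nt]-[kt](p+q)$ at the block boundaries.
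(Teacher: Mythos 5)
Your argument is correct and follows essentially the same approach as the paper: identify $\stilde W_n+R_n$ with $n^{-1/2}S_{[kt](p+q)}$, bound the gap $[nt]-[kt](p+q)$ by $O(p+q)$ uniformly in $t$, reduce the supremum over $t$ to a maximum over $i$ of $T^{i(p+q)}$-translates of $\max_{j\le O(p+q)}|S_j|$, then apply Proposition~\ref{prop:maximal_ineq_birkhoff} together with the crude $L^{2\gamma}$ bound $\E[\max_i M_i^{2\gamma}]\le\sum_i\E[M_i^{2\gamma}]$. The only cosmetic difference is your gap bound of $3(p+q)$ versus the paper's sharper $2(p+q)$ obtained from $[nt]-[kt](p+q)\le nt-(kt-1)(p+q)=t(n-k(p+q))+p+q\le 2(p+q)$; this does not affect the result.
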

	\begin{proof}
		Let $ t\in [0,1] $. Since $ \{0,\dots,[kt](p+q)-1\} $ is the union of first $ [kt] $ big blocks and the first $ [kt] $ small blocks, 
		\[ n^{-1/2}S_{[kt](p+q)}=\sum_{i=1}^{[kt]}(X_i+Y_i)=\stilde W_n(t)+R_n(t). \]
		Now since $ n\ge k(p+q) $, we have $ [nt]\ge [kt(p+q)]\ge [kt](p+q) $. Let $ h(t)=[nt]-[kt](p+q) $. Then
		\begin{align*}
			W_n(t)&=n^{-1/2}S_{[nt]} = n^{-1/2}S_{[kt](p+q)} + n^{-1/2}S_{h(t)}\circ T^{[kt](p+q)}\\
			&=\stilde W_n(t)+R_n(t)+n^{-1/2}S_{h(t)}\circ T^{[kt](p+q)}.
		\end{align*}
		Recall that $k=\big[\frac{n}{p+q}\big] $ so $ n-k(p+q)\le p+q $. Hence 
		\begin{align*}
			h(t)\le nt - (kt - 1)(p+q)= t(n - k(p+q)) + p + q\le 2(p+q).
		\end{align*}
		It follows that 
		\begin{align*}
			\Big|W_n(t)-\big(\stilde W_n(t)+R_n(t)\big)\Big|= n^{-1/2}|S_{h(t)}|\circ T^{[kt](p+q)}\le A \circ T^{[kt](p+q)},
		\end{align*}
		where $ A=n^{-1/2}\max_{j\le 2(p+q)}|S_j| $. Hence
		\begin{align*}
			\pnorm{\sup_{0\le t\le 1}\Big|W_n(t)-\big(\stilde W_n(t)+R_n(t)\big)\Big|}^{2\gamma}_{2\gamma}&\le \Ex[\mu]{\left|\sup_{0\le t\le 1}A\circ T^{[kt](p+q)}\right|^{2\gamma}}\\
			&=\Ex[\mu]{\left|\max_{0\le i\le k}A\circ T^{i(p+q)}\right|^{2\gamma}}.
		\end{align*}
		Now by Proposition~\ref{prop:maximal_ineq_birkhoff}, we have $ \pnorm{A}_{2\gamma}\ll n^{-1/2}(p+q)^{1/2}\ll n^{(a-1)/2} $. It follows that
		\begin{align*}
			\Ex[\mu]{\left|\max_{0\le i\le k}A\circ T^{i(p+q)}\right|^{2\gamma}}&\ll \Ex[\mu]{\sum_{i=0}^{k}|A\circ T^{i(p+q)}|^{2\gamma}}=(k+1)\Ex[\mu]{|A|^{2\gamma}}\\
			&\ll n^{1-a}n^{\gamma(a-1)}=n^{(1-a)(1-\gamma)},
		\end{align*}
		which completes the proof.
	\end{proof}
Next we control the contribution of the small blocks.
\begin{lemma}\label{lem:small_blocks_negligible}
	There is a constant $C>0$ such that $\wass_{2\gamma}(\stilde W_n+R_n, \stilde W_n)\le Cn^{(b-a)/2}$ for all $n\ge 1$.
\end{lemma}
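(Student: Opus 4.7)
The plan is to reduce the problem to bounding $\bigl\|\sup_{0\le t\le 1}|R_n(t)|\bigr\|_{2\gamma}$ via a coupling argument, and then to estimate this quantity using the moment bound on Birkhoff sums over arbitrary finite index sets already established in Lemma~\ref{lem:moment_bds_arbitrary_set}.

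First, I would observe that by coupling $\shat W_n$ and $R_n$ independently on a common probability space, the pair $(\shat W_n + R_n, \shat W_n)$ is a coupling of the two laws in question. Hence
\[ \wass_{2\gamma}(\shat W_n + R_n, \shat W_n) \le \bigl\|\sup_{0\le t\le 1}|R_n(t)|\bigr\|_{2\gamma}, \]
so it suffices to bound the right-hand side.

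Next, setting $U_j = \sum_{i=1}^{j} Y_i$ for $0\le j \le k$, one has $\sup_{0\le t\le 1}|R_n(t)| = \max_{0\le j\le k} |U_j|$. For any $0 \le j' \le j \le k$, the increment $U_j - U_{j'}$ equals $n^{-1/2}\sum_{r\in I_{j',j}} v\circ T^r$, where $I_{j',j}\subset \{0,\dots,n-1\}$ is the union of the $(j'+1)$-th through $j$-th small blocks and thus has cardinality $(j-j')q$. Lemma~\ref{lem:moment_bds_arbitrary_set} therefore yields $\|U_j - U_{j'}\|_{2\gamma} \le C n^{-1/2}\bigl((j-j')q\bigr)^{1/2}$, and applying Serfling's maximal inequality~\cite[Corollary B.2]{serfling} exactly as in the proof of Proposition~\ref{prop:maximal_ineq_birkhoff} gives
\[ \bigl\|\max_{0\le j\le k}|U_j|\bigr\|_{2\gamma} \le C n^{-1/2}(kq)^{1/2}. \]

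Finally, since $k \le n/p \ll n^{1-a}$ and $q \le n^b$, this bound is $\ll n^{(b-a)/2}$, as required. I do not anticipate any genuine obstacle: the only nontrivial dynamical input—a moment estimate for Birkhoff sums over an arbitrary finite subset of indices—is already supplied by Lemma~\ref{lem:moment_bds_arbitrary_set}, and the remaining steps are entirely formal.
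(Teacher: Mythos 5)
Your proof is correct and begins exactly as the paper's does, by reducing $\wass_{2\gamma}$ to a bound on $\bigl\|\sup_{0\le t\le 1}|R_n(t)|\bigr\|_{2\gamma}$; where you diverge is in the choice of maximal inequality. The paper invokes Wu's dyadic maximal inequality for stationary sequences~\cite[Proposition~1(ii)]{wu_strong_invariance}, which needs only the moment bound $\pnorm{A_m}_{2\gamma}\ll n^{-1/2}m^{1/2}q^{1/2}$ for the partial sums $A_m=\sum_{i=1}^m Y_i$, together with the stationarity of $(Y_i)$, summing over dyadic scales. You instead apply Serfling's inequality~\cite[Corollary B.2]{serfling}, reusing the same tool as in Proposition~\ref{prop:maximal_ineq_birkhoff}; this requires moment control on \emph{all} increments $U_j-U_{j'}$, which you correctly supply by observing that $U_j-U_{j'}$ is a Birkhoff sum over an arbitrary finite index set (a disjoint union of $j-j'$ small blocks of length $q$), so Lemma~\ref{lem:moment_bds_arbitrary_set} gives $\pnorm{U_j-U_{j'}}_{2\gamma}\ll n^{-1/2}\bigl((j-j')q\bigr)^{1/2}$. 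Both routes yield $\bigl\|\max_{0\le j\le k}|U_j|\bigr\|_{2\gamma}\ll n^{-1/2}(kq)^{1/2}\ll n^{(b-a)/2}$. Your version has the modest advantages of recycling the maximal inequality already used in the paper and of not requiring stationarity of the increments; the paper's Wu-based route only needs moments of dyadic-length partial sums and does not require increment bounds over arbitrary shifts.
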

\begin{proof}
	We proceed by bounding $\pnorm{\sup_{0\le t\le 1}|R_n(t)|}_{2\gamma}$. By working componentwise, without loss of generality we may assume that $ d = 1 $. For $m\ge 1$, define $A_m = \sum_{i=1}^m Y_i$. Then $A_m$ is the sum of $n^{-1/2} v\circ T^i$ over $m$ disjoint blocks of length $q$ so by Lemma~\ref{lem:moment_bds_arbitrary_set}, $\pnorm{A_m}_{2\gamma}\ll n^{-1/2}m^{1/2}q^{1/2}$. Since $Y_i$ is a stationary sequence of mean zero random variables in $L^{2\gamma}$, by~\cite[Proposition 1(ii)]{wu_strong_invariance}, we have that
	\begin{align*}
		\pnorm{\max_{1\le m \le 2^N}|A_m|}_{2\gamma}\le \sum_{j=0}^N 2^{\frac{N-j}{2\gamma}}\pnorm{A_{2^j}}_{2\gamma}.
	\end{align*}
	for all $N\ge 1$. Thus 
	\begin{align*}
		\pnorm{\max_{1\le m \le 2^N}|A_m|}_{2\gamma}
		&\ll n^{-1/2}q^{1/2}\sum_{j=0}^N 2^{\frac{N-j}{2\gamma}}2^{j/2}
		\ll n^{-1/2}q^{1/2}2^{N/2}.
	\end{align*}
	By taking $N = \lceil \log_2(k)\rceil$, it follows that 
	\[ \pnorm{\sup_{0\le t\le 1}|R_n(t)|}_{2\gamma}=\pnorm{\max_{1\le m \le k}|A_m|}_{2\gamma}\ll n^{-1/2}q^{1/2}k^{1/2}\ll n^{(b-a)/2}.\qedhere \]
\end{proof}
	Let $ \shat X_1,\dots,\shat X_k $ be independent random vectors such that $ \shat{X}_i \eqd X_i $, and define $ \shat W_n(t)=\sum_{i=1}^{[kt]}\shat X_i $ for $ t\in [0,1] $.
	\begin{lemma}\label{lemma:big_block_indep_approx}
		There is a constant $C>0$ such that $\kant(\stilde W_n, \shat W_n)\le Cn^{3/2-a-b\gamma}$ for all $n\ge 1$.
	\end{lemma}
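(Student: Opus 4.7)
The plan is to reduce to a bound on $|\Ex[\mu]{F(\stilde W_n)}-\Ex{F(\shat W_n)}|$ for Lipschitz $F\map{D}{\R}$ via Kantorovich--Rubinstein duality~\eqref{eq:kantorovich_rubenstein}, then view both $\stilde W_n$ and $\shat W_n$ as (the same) continuous function of the respective block vectors and apply Lemma~\ref{lemma:weak_dependence}.

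\textbf{Reduction to a vector-valued functional.} Define $\Psi\map{(\R^d)^k}{D}$ by $\Psi(x_1,\dots,x_k)(t)=\sum_{i=1}^{[kt]}x_i$. Then $\stilde W_n=\Psi(X_1,\dots,X_k)$ and $\shat W_n=\Psi(\shat X_1,\dots,\shat X_k)$, and since $|y|$ is the $\ell^1$ norm on $\R^d$, a direct computation gives $\norminf{\Psi(x)-\Psi(y)}\le \sum_{i=1}^k|x_i-y_i|$. Hence for any $1$-Lipschitz $F\map{D}{\R}$, the composition $\tilde F=F\circ \Psi$ is $1$-Lipschitz on $(\R^d)^k$ with the $\ell^1$ norm. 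Replacing $F$ by $F-F(\Psi(0))$ does not alter $\Ex{F(\stilde W_n)}-\Ex{F(\shat W_n)}$, so we may assume additionally that $\tilde F(0)=0$ and hence $\norminf{\tilde F\big|_{B(0,R)^k}}\le kR$ for any $R>0$.

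\textbf{Parameter estimates.} Writing $X_i=\varPhi_i(T^{\ell_i}x,\dots,T^{u_i}x)$ with $\varPhi_i(y_0,\dots,y_{p-1})=n^{-1/2}\sum_{j=0}^{p-1}v(y_j)$, $\ell_i=(i-1)(p+q)$, and $u_i=\ell_i+p-1$, we find
\[ R=\max_i\norminf{\varPhi_i}\ll n^{-1/2}p\ll n^{a-1/2},\quad \sdsemi{\varPhi_i}{j}\ll n^{-1/2}. \]
Therefore $\norminf{\tilde F\big|_{B(0,R)^k}}\le kR\ll n^{1/2}$ and
\[ \sum_{i=1}^k\sum_{j=0}^{p-1}\sdsemi{\varPhi_i}{j}\ll kp\cdot n^{-1/2}\ll n^{1/2}. \]
Also the gap between consecutive big blocks is $\ell_{i+1}-u_i=q+1\sim n^b$, so
\[ \sum_{r=0}^{k-2}(\ell_{r+1}-u_r)^{-\gamma}\ll k\,n^{-b\gamma}\ll n^{1-a-b\gamma}. \]

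\textbf{Applying the weak dependence lemma.} Plugging these estimates into Lemma~\ref{lemma:weak_dependence} applied to $\tilde F$ and the vectors $X_1,\dots,X_k\in \R^d$ gives
\[ \left|\Ex[\mu]{\tilde F(X_1,\dots,X_k)}-\Ex{\tilde F(\shat X_1,\dots,\shat X_k)}\right|\ll n^{1-a-b\gamma}\cdot n^{1/2}=n^{3/2-a-b\gamma}. \]
Taking the supremum over $1$-Lipschitz $F\map{D}{\R}$ and invoking~\eqref{eq:kantorovich_rubenstein} yields the claimed bound.

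The only subtlety is the $\norminf{\tilde F}$ term in Lemma~\ref{lemma:weak_dependence}: a naive Lipschitz bound for $F$ on $D$ gives no $L^\infty$ control, but after subtracting the constant $F(\Psi(0))$ the restriction to $B(0,R)^k$ is bounded by $kR$, and this turns out to be of the same order $n^{1/2}$ as the H\"older-seminorm contribution, so it does not worsen the rate.
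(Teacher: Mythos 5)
Your proof is correct and follows essentially the same route as the paper's: Kantorovich--Rubinstein duality, the continuous map $\pi_k(x_1,\dots,x_k)(t)=\sum_{i=1}^{[kt]}x_i$ (your $\Psi$), normalization of $F$ so that it vanishes at the origin to control the $L^\infty$ term, and an application of Lemma~\ref{lemma:weak_dependence} with the same parameter estimates $R\ll n^{a-1/2}$, $kR\ll n^{1/2}$, $\sum_{i}\sum_j\sdsemi{\varPhi_i}{j}\ll n^{1/2}$, and gap sum $\ll n^{1-a-b\gamma}$. The ``subtlety'' you flag about the $\norminf{\cdot}$ term is exactly the observation the paper makes when it subtracts $G(0)$, so your argument matches the paper's in structure and in all substantive steps.
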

\begin{proof}
	By the Kantorovich-Rubinstein Theorem~\eqref{eq:kantorovich_rubenstein}, it suffices to show that there is a constant $C>0$ such that \[|\Ex[\mu]{G(\stilde W_n)} - \Ex[\mu]{G(\shat W_n)}|\le Cn^{3/2-a-b\gamma}\Lip(G)\] for any Lipschitz function $G:{D([0,1],\R^d)\to \R}$ and $n\ge 1$.

		We proceed by using Lemma~\ref{lemma:weak_dependence}. First note that
		$ X_i(x)=\varPhi_i(T^{\ell_i}x,\dots,T^{u_i}x) $
		where $ \ell_i=(i-1)(p+q)$, $u_i=\ell_i+p-1 $ and \[ \varPhi_i(y_0,\dots,y_{u_i-\ell_i})=n^{-1/2}\sum_{r=0}^{u_i-\ell_i}v(y_r). \]
		Let $ 0\le r\le u_i-\ell_i $. Then $ \sdsemi{\varPhi_i}{r}\le n^{-1/2}\dsemi{v_n}.$ Let $ R=\max_i \norminf{\varPhi_i}\le pn^{-1/2}\norminf{v_n}.$ Define $ \pi_k:B(0,R)^k\to D([0,1],\R^d) $ by
		\[ \pi_k(x_1,\dots,x_k)(t)=\sum_{i=1}^{[kt]}x_i. \]
		for $ t\in [0,1] $. Then $ \stilde W_n=\pi_k(X_1,\dots,X_k) $ and $ \shat W_n = \pi_k(\shat X_1,\dots,\shat X_k) $.
		
		Now for all $ (x_1,\dots,x_k),(x'_1,\dots,x'_k)\in B(0,R)^k $,
		\[ \sup_{t\in[0,1]}\Bigg|\sum_{i=1}^{[kt]}x_i-\sum_{i=1}^{[kt]}x'_i\Bigg|\le \sum_{i=1}^k|x_i-x_i'|, \]
		so $ \Lip(\pi_k)\le 1 $. Without loss of generality we may assume that $ G(0)=0 $, for otherwise we can consider $ G'=G-G(0) $. Thus 
		\begin{equation}\label{eq:G_circ_pi_norminf}
			\norminf{G\circ \pi_k}\le \Lip(G)\norminf{\pi_k}\le kR\Lip(G)\ll n^{1/2}\Lip(G).
		\end{equation}
		By applying Lemma~\ref{lemma:weak_dependence} with $ F=G\circ \pi_k $, we obtain that 
		$ |\Ex[\mu_n]{G(\stilde W_n)}-\Ex{G(\shat W_n)}|\le A,$ where 
		\begin{align*}
			A &= C\sum_{r=1}^{k-1}(\ell_{r+1}-u_r)^{-\gamma}\biggl(\norminf{G\circ \pi_k}+\Lip(G\circ \pi_k)\sum_{i=1}^{k}\sum_{j=0}^{u_i-\ell_i}\sdsemi{\varPhi_i}{j}\biggr).
		\end{align*}
		Note that $ u_i-\ell_i=p-1 $ and $ \ell_{i+1}-u_i=q+1\ge n^b $ for $ 1\le i\le k $. Thus
		\begin{equation*}
			\sum_{r=1}^{k-1}(\ell_{r+1}-u_r)^{-\gamma}\le kn^{-b\gamma}\ll n^{1-a-b\gamma}
		\end{equation*}
		and 
		\begin{equation*}
			\sum_{i=1}^{k}\sum_{j=0}^{u_i-\ell_i}\sdsemi{\varPhi_i}{j}\le kpn^{-1/2}\dsemi{v_n}\le n^{1/2}\dsemi{v_n}.
		\end{equation*}
		By combining these bounds with~\eqref{eq:G_circ_pi_norminf} it follows that
		\begin{align*}
			A&\ll n^{1-a-b\gamma}\biggl(\norminf{G\circ \pi_k}+\Lip(G\circ \pi_k)n^{1/2}\biggr)\\
			&\ll n^{1-a-b\gamma}\biggl(\Lip(G)\,n^{1/2}+\Lip(G)\Lip(\pi_k)n^{1/2}\biggr)\le 2n^{3/2-a-b\gamma}\Lip(G).\qedhere
		\end{align*}
	\end{proof}
	Let $ V_n = k\Ex{\shat X_1 \otimes \shat X_1} $, so $ V_n = \Ex{\shat W_n(1)\otimes \shat W_n(1)} $ is the covariance of $ \shat W_n(1) $. Before proceeding further, we need a bound on the difference between $ V_n $ and the limiting covariance $ \Sigma $.
	\begin{lemma}\label{lemma:indep_approx_covar_error}
		There is a constant $ C>0 $ such that for all $ n\ge 1 $, \[ |V_n - \Sigma|\le C(n^{a(1-\gamma)}+n^{b-a}+n^{a-1}). \]
	\end{lemma}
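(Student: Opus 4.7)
The plan is to use stationarity to rewrite $\Ex{\shat X_1\otimes \shat X_1}$ in terms of the variance of the Birkhoff sum $S_p$, then invoke the variance asymptotics already established in Lemma~\ref{lem:var_S_n_error}, and finally estimate the arithmetic discrepancies arising from the block decomposition.

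First I would note that, by $T$-invariance of $\mu$, each $X_i$ has the same distribution as $n^{-1/2}S_p$, so that $\Ex{\shat X_1\otimes \shat X_1} = n^{-1}\Ex[\mu]{S_p\otimes S_p}$ and hence
\[ V_n = \frac{k}{n}\,\Ex[\mu]{S_p\otimes S_p}. \]
Applying Lemma~\ref{lem:var_S_n_error} with $p$ in place of $n$ yields $\Ex[\mu]{S_p\otimes S_p} = p\,\Sigma + E_p$ where $|E_p|\ll p^{2-\gamma}+\log p$, and therefore
\[ V_n - \Sigma = \Bigl(\frac{kp}{n}-1\Bigr)\Sigma + \frac{k}{n}\,E_p. \]

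Next I would bound the two terms separately. Since $k=[n/(p+q)]$, we have $kp/n = p/(p+q) + O(p/n) = 1 - q/(p+q) + O(n^{a-1})$, giving $|kp/n-1|\ll n^{b-a}+n^{a-1}$. For the remainder, $k/n\le 1/p \ll n^{-a}$, so $(k/n)\,p^{2-\gamma}\ll n^{-a}\cdot n^{a(2-\gamma)}=n^{a(1-\gamma)}$, while $(k/n)\log p\ll n^{-a}\log n$ is of smaller order than $n^{b-a}$ since $b>0$. Collecting these estimates produces $|V_n-\Sigma|\ll n^{a(1-\gamma)}+n^{b-a}+n^{a-1}$, as required.

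The argument is essentially routine once one recognizes that Lemma~\ref{lem:var_S_n_error} carries the substantive probabilistic content; no genuine obstacle is expected beyond careful bookkeeping of the relations between $p$, $q$, $k$ and $n$.
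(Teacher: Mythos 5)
Your proposal is correct and follows essentially the same route as the paper's proof: rewrite $V_n = \tfrac{k}{n}\Ex[\mu]{S_p\otimes S_p}$ via stationarity, invoke Lemma~\ref{lem:var_S_n_error}, split off the $(\tfrac{kp}{n}-1)\Sigma$ term, and bound the arithmetic discrepancies using $p\sim n^a$, $q\sim n^b$, $k\sim n^{1-a}$, absorbing the $n^{-a}\log n$ remainder into $n^{b-a}$. The bookkeeping matches the paper's estimates line for line.
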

	\begin{proof}
		Since $\shat X_1\eqd X_1 $, we have
		$V_n = k\Ex[\mu]{ X_1 \otimes X_1}=kn^{-1}\Ex[\mu]{S_p\otimes S_p}.$ By Lemma~\ref{lem:var_S_n_error},
		$ |p^{-1}\Ex{S_p\otimes S_p} - \Sigma|\ll p^{-1}\log p+p^{1-\gamma}. $
		Since $ kp\le n $ and $ p=[n^a]$, it follows that
		\begin{equation*}
			\big|V_n - \tfrac{kp}{n}\Sigma\big|=\tfrac{kp}{n}\big|p^{-1}\Ex{S_p\otimes S_p} - \Sigma\big|\ll n^{-a}\log n+n^{a(1-\gamma)}.
		\end{equation*}
		Now $ n-k(p+q)\le p + q $ so 
		\[ \big|\big(\tfrac{kp}{n}-1\big)\Sigma\big|\le \tfrac{1}{n}(kq+p+q)|\Sigma|\ll \tfrac{1}{n}(n^{1-a}n^b+n^a)=n^{b-a}+n^{a-1}. \]
		Hence 
		\begin{equation*}
			|V_n - \Sigma| \ll n^{-a}\log n+n^{a(1-\gamma)}+n^{b-a}+n^{a-1}\ll n^{a(1-\gamma)}+n^{b-a}+n^{a-1},
		\end{equation*}
		as required.
	\end{proof}
	\begin{lemma}\label{lemma:approx_indep_big_blocks_bm}
		There is a constant $ C>0 $ such that for all $ n\ge 1 $,
		\[ \wass_{2\gamma}(\shat W_n,W) \le C(n^{a(1-\gamma)/2}+ n^{(b-a)/2}+n^{(1-a)\frac{1-\gamma}{2\gamma}}). \]
	\end{lemma}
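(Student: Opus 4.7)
The plan is to compare $\shat W_n$ to $W$ in two steps: first apply Corollary~\ref{corol:wip_rates_iid} to pass from $\shat W_n$ to a Brownian motion $\widetilde W$ with the finite-$n$ covariance $V_n$, then compare $\widetilde W$ to $W$ using the covariance estimate of Lemma~\ref{lemma:indep_approx_covar_error}. Throughout I use that $V_n \to \Sigma$, which means that, assuming $\Sigma$ is nondegenerate, $\sigma_{\max}(V_n)/\sigma_{\min}(V_n)$ stays uniformly bounded in $n$; the degenerate case can be handled by decomposing $v$ along the range of $\Sigma$ and its orthogonal complement.

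For the first step, apply Corollary~\ref{corol:wip_rates_iid} with exponent $2\gamma$ to the iid vectors $(\shat X_i)_{i=1}^k$. Since $\shat X_1 \eqd n^{-1/2} S_p$, Lemma~\ref{lem:moment_bds_arbitrary_set} gives $\pnorm{\shat X_1}_{2\gamma} \ll n^{-1/2} p^{1/2} \ll n^{(a-1)/2}$. Since $V_n$ is bounded, $|\Ex{\shat X_1 \otimes \shat X_1}^{1/2}| \ll k^{-1/2} \ll n^{(a-1)/2}$. Also $k^{1/(2\gamma)} \ll n^{(1-a)/(2\gamma)}$. Combining these,
\[ \wass_{2\gamma}(\shat W_n, \widetilde W) \ll n^{(1-a)/(2\gamma) + (a-1)/2} = n^{(1-a)(1-\gamma)/(2\gamma)}, \]
which accounts for the third term in the stated bound.

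For the second step, couple $\widetilde W$ and $W$ by writing $\widetilde W = V_n^{1/2} B$ and $W = \Sigma^{1/2} B$ for a common standard Brownian motion $B$. Then
\[ \wass_{2\gamma}(\widetilde W, W) \le |V_n^{1/2} - \Sigma^{1/2}| \cdot \pnorm{\sup_{0\le t\le 1}|B(t)|}_{2\gamma} \ll |V_n - \Sigma|^{1/2}, \]
using the \holder-$1/2$ continuity of the matrix square root on positive semidefinite matrices together with the fact that $\sup_t |B(t)|$ has moments of every order. By Lemma~\ref{lemma:indep_approx_covar_error}, this is $\ll n^{a(1-\gamma)/2} + n^{(b-a)/2} + n^{(a-1)/2}$. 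The term $n^{(a-1)/2}$ is dominated by the rate from step one (since $(\gamma-1)/\gamma<1$ gives $(1-a)(\gamma-1)/(2\gamma) < (1-a)/2$, so $n^{(a-1)/2}$ decays faster), and a triangle inequality yields the stated bound.

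The main subtlety is the uniform control on $\sigma_{\max}(V_n)/\sigma_{\min}(V_n)$ required by Corollary~\ref{corol:wip_rates_iid}: this needs a lower bound on $\sigma_{\min}(V_n)$, which is immediate from $V_n \to \Sigma$ when $\Sigma$ is nondegenerate, but in the degenerate case requires decomposing $v$ along the range of $\Sigma$ and its orthogonal complement and handling each piece separately (the orthogonal piece contributes a term controlled by $|V_n - \Sigma|$, already captured by step two).
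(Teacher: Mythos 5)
Your plan matches the paper's in structure (Corollary~\ref{corol:wip_rates_iid} to pass from $\shat W_n$ to a Brownian motion with covariance $V_n$, followed by a covariance comparison), but the covariance-comparison step differs in a technically interesting way. You invoke the \holder-$1/2$ continuity of the matrix square root on the full cone of positive semidefinite matrices ($\|A^{1/2}-B^{1/2}\|\le\|A-B\|^{1/2}$ in the operator norm), which immediately produces the $/2$-exponent terms $n^{a(1-\gamma)/2}+n^{(b-a)/2}$ in the stated bound. The paper instead uses the Lipschitz estimate $\|V_n^{1/2}-\Sigma^{1/2}\|_2\le\|\Sigma^{-1/2}\|_2\|V_n-\Sigma\|_2$ from Horn--Johnson, which requires $\Sigma$ positive definite and gives the sharper $|V_n-\Sigma|$ in that case; the $/2$-exponent terms in the final statement then arise from a separate Rosenthal-type estimate applied to the degenerate directions. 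Your route is cleaner in the nondegenerate case, but both routes still require the decomposition $\R^d=E\oplus F$ when $\Sigma$ is degenerate, because Corollary~\ref{corol:wip_rates_iid} needs $\sigma_{\min}(V_n)$ bounded away from zero. Your parenthetical sketch of the degenerate case is correct in spirit but slightly incomplete: bounding $\wass_{2\gamma}(\pi_F\shat W_n,0)$ requires the maximal Rosenthal inequality (Lemma~\ref{lemma:maximal_rosenthal}), which contributes not only the $L^2$ term of order $|V_n-\Sigma|^{1/2}$ that you mention but also a moment term $k^{1/(2\gamma)}\pnorm{\shat X_1}_{2\gamma}$ of the same order $n^{(1-a)(1-\gamma)/(2\gamma)}$ as your step-one rate.
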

\begin{proof}
	We first consider the special cases where the covariance matrix $\Sigma$ is positive definite and $\Sigma=0$.
	
	Let $\Sigma$ be positive definite. By Corollary~\ref{corol:wip_rates_iid}, 
	\begin{equation*}
		\wass_{2\gamma}(\shat W_n, W^{(V_n)})\le Ck^{1/(2\gamma)}\Big(\big|\shat X_1\big|_{2\gamma}\frac{\sigma_{\max}(V_n)}{\sigma_{\min}(V_n)}+|\Ex{\shat X_1 \otimes \shat X_1}^{1/2}|\Big),
	\end{equation*}
	where $ W^{(V_n)} $ is a Brownian motion with covariance $ V_n $. By Lemma~\ref{lemma:indep_approx_covar_error},  $ V_n \to \Sigma $. Since $ \Sigma $ is positive definite, it follows that $ \sigma_{\max}(V_n)/\sigma_{\min}(V_n)\to \sigma_{\max}(\Sigma)/\sigma_{\min}(\Sigma) $. Moreover, $ V_n^{1/2}\to \Sigma^{1/2} $ so 
	\[ |\Ex{\shat X_1 \otimes \shat X_1}^{1/2}|=k^{-1/2}|V_n^{1/2}|\ll k^{-1/2}. \]
	Now $ \shat X_1 \eqd X_1 = n^{-1/2}S_p $ so by Proposition~\ref{prop:maximal_ineq_birkhoff}, we have $ \pnorm{\shat X_1}_{2\gamma}\ll n^{-1/2}p^{1/2}$. 
	Since $ k=\big[\frac{n}{p+q}\big]\sim n/p\sim n^{1-a} $, it follows that 
	\begin{align}\label{eq:pnorm_sum_bd}
		k^{1/(2\gamma)}\pnorm{\shat X_1}_{2\gamma}\ll n^{(1-a)/(2\gamma)}n^{-1/2+a/2}=n^{(1-a)\frac{1-\gamma}{2\gamma}}
	\end{align}
	and hence that
	\begin{align}
		\wass_{2\gamma}(\shat W_n, W^{(V_n)})&\ll n^{(1-a)\frac{1-\gamma}{2\gamma}}+ k^{\frac{1}{2\gamma}-\frac{1}{2}}\ll n^{(1-a)\frac{1-\gamma}{2\gamma}}.\label{eq:approx_brownian_V_n}
	\end{align}
	Write $ W^{(V_n)}=V_n^{1/2}B $, where $ B $ is a standard Brownian motion. Let $ W=\Sigma^{1/2}B $, so $ W $ is a Brownian motion with covariance $ \Sigma $. Note that
	\[ |W^{(V_n)}(t)-W(t)|=|(V^{1/2}_n - \Sigma^{1/2})B(t)|\le |V^{1/2}_n - \Sigma^{1/2}||B(t)|. \]
	Now since $ \Sigma $ is positive definite, by~\cite[equation (7.2.13)]{horn_johnson}, 
	\[ \|V_n^{1/2}-\Sigma^{1/2}\|_2\le \|\Sigma^{-1/2}\|_2\|V_n - \Sigma\|_2,  \]
	where $ \|\cdot\|_2 $ denotes the spectral norm. Hence by Lemma~\ref{lemma:indep_approx_covar_error},
	\[ \pnorm{\sup_{0\le t\le 1}|W^{(V_n)}(t)-W(t)|}_{2\gamma}\ll |V_n - \Sigma|\pnorm{\sup_{0\le t\le 1}|B(t)|}_{2\gamma}\ll n^{a(1-\gamma)}+n^{b-a}+n^{a-1}. \]
	By combining this bound with~\eqref{eq:approx_brownian_V_n}, it follows that 
	\begin{align}
		\wass_{2\gamma}(\shat W_n, W)&\le \wass_{2\gamma}(\shat W_n, W^{(V_n)})+\wass_{2\gamma}(W^{(V_n)}, W)\nonumber\\
		&\ll n^{(1-a)\frac{1-\gamma}{2\gamma}}+n^{a(1-\gamma)}+n^{b-a}+n^{a-1}\nonumber\\
		&\ll n^{(1-a)\frac{1-\gamma}{2\gamma}}+n^{a(1-\gamma)}+n^{b-a}.\label{eq:bound_nonsingular_case}
	\end{align}
	\par Let $\Sigma=0$. Then by Lemma~\ref{lemma:indep_approx_covar_error}, \[k\Ex{|\shat{X}_1|^2}\le k|\Ex{\shat{X}_1\otimes \shat{X}_1}|\ll n^{a(1-\gamma)}+n^{b-a}+n^{a-1}.\]
	Hence by Lemma~\ref{lemma:maximal_rosenthal} and~\eqref{eq:pnorm_sum_bd},
	\begin{align}\label{eq:bound_0_covar_case}
		\wass_{2\gamma}(\shat{W}_n,0)&=\pnorm{\max_{1\le j\le k}\Big|\sum_{i=1}^{j} \widehat{X}_i\Big|}_{2\gamma}\ll \biggl(\sum_{i=1}^{k} \Ex{|\shat{X}_i|^2}\biggr)^{\frac{1}{2}}+ \biggl(\sum_{i=1}^{k} \Ex{|\shat{X}_i|^{2\gamma}}\biggr)^{\frac{1}{2\gamma}} \nonumber\\
		&= (k\Ex{|\shat{X}_1|^2})^{\frac{1}{2}}+k^{\frac{1}{2\gamma}}\pnorm{X_1}_{2\gamma}
		\ll n^{a(1-\gamma)/2} + n^{(b-a)/2} + n^{(a-1)/2}+n^{(1-a)\frac{1-\gamma}{2\gamma}}\nonumber\\
		&\ll n^{a(1-\gamma)/2} + n^{(b-a)/2} +n^{(1-a)\frac{1-\gamma}{2\gamma}}.
	\end{align}
	
	Finally, we consider the case where $\Sigma$ is an arbitrary symmetric positive semidefinite matrix. 
	Then we can write $\R^d$ as a direct sum $\R^d = E\oplus F$ such that $\Sigma$ is nonsingular on $E$ and vanishes along $F$. Let $\pi_E$ denote the projection onto $E$. Then $W = \pi_E W$ so combining~\eqref{eq:bound_0_covar_case} and~\eqref{eq:bound_nonsingular_case} yields that
	\begin{align*}
		\wass_{2\gamma}(\shat{W}_n,W)&\le \wass_{2\gamma}(\shat{W}_n, \pi_E \shat{W}_n) + \wass_{2\gamma}(\pi_E \shat{W}_n,\pi_E W)\\
		&\le \pnorm{\sup_{0\le t\le 1}\big|\shat{W}_n(t)-\pi_E \shat{W}_n(t)\big|}_{2\gamma}+\wass_{2\gamma}(\pi_E \shat{W}_n,\pi_E W)\\
		&\ll n^{a(1-\gamma)/2} + n^{(b-a)/2} + n^{(1-a)\frac{1-\gamma}{2\gamma}}+n^{(1-a)\frac{1-\gamma}{2\gamma}}+n^{a(1-\gamma)}+n^{b-a}\\
		&\ll n^{a(1-\gamma)/2}+ n^{(b-a)/2}+n^{(1-a)\frac{1-\gamma}{2\gamma}},
	\end{align*}
	as required.
\end{proof}
	We now have all the estimates required to prove our main theorem.
\begin{proof}[Proof of Theorem~\ref{thm:rates_wip_lip}]
	By Lemmas~\ref{lemma:blocking_step_error} and~\ref{lem:small_blocks_negligible},
	\begin{align}
		\wass_{2\gamma}(W_n,\stilde W_n)&\le \wass_{2\gamma}(W_n, \stilde W_n+R_n)+\wass_{2\gamma}(\stilde W_n+R_n, \stilde W_n)\nonumber\\
		&\ll n^{(1-a)\frac{1-\gamma}{2\gamma}}+n^{(b-a)/2}.\label{eq:diff_with_big_blocks}
	\end{align}
	On the other hand, by Lemma~\ref{lemma:approx_indep_big_blocks_bm},
	\begin{align}
		\wass_{2\gamma}(\shat W_n,W)\ll n^{a(1-\gamma)/2}+ n^{(b-a)/2}+n^{(1-a)\frac{1-\gamma}{2\gamma}}.\label{eq:indep_blocks_vs_brownian}
	\end{align}
	Now by Proposition~\ref{prop:maximal_ineq_birkhoff}, $\wass_{2\gamma}(W_n,0)=O(1)$ and hence by~\eqref{eq:diff_with_big_blocks} we have that $\wass_{2\gamma}(\stilde W_n, 0)=O(1)$. Similarly,~\eqref{eq:indep_blocks_vs_brownian} implies that $\wass_{2\gamma}(\shat W_n, 0)=O(1)$. Hence by Lemma~\ref{lemma:big_block_indep_approx} and Lemma~\ref{lem:wasserstein_interpolation}, it follows that 
	\begin{align*}
		\wass_r(\stilde W_n, \shat W_n)\ll \wass_1(\stilde W_n, \shat W_n)^\frac{2\gamma-r}{r(2\gamma-1)}\ll n^{\frac{2\gamma-r}{r(2\gamma-1)}(3/2-a-b\gamma)}.
	\end{align*}
	By combining this with~\eqref{eq:diff_with_big_blocks} and~\eqref{eq:indep_blocks_vs_brownian}, we obtain that $ \wass_r(W_n, W)\ll n^{-\kappa(\gamma,r)} $ where
	\begin{align}
		\kappa(\gamma,r)=\min\Big\{(1-a)\frac{\gamma-1}{2\gamma},\, \frac{(2\gamma-r)(a+b\gamma-\frac{3}{2})}{r(2\gamma-1)},\, \frac{a-b}{2},\, \frac{a(\gamma-1)}{2}\Big\}.\label{eq:wip_implicit_Wp_bd}
	\end{align}
	Recall that $ 0<b<a<1 $ are arbitrary. Finally, we show how to choose $ a $ and $ b $ in order to obtain the expression for $ \kappa(\gamma,r) $ in the statement of this theorem. We choose $a$ and $b$ such that 
	\[ (1-a)\frac{\gamma-1}{2\gamma}=\frac{(2\gamma-r)(a+b\gamma-\frac{3}{2})}{r(2\gamma-1)}=\frac{a-b}{2}. \]
	Note that $ (1-a)\frac{\gamma-1}{2\gamma}=\frac{(2\gamma-r)(a+b\gamma-\frac{3}{2})}{r(2\gamma-1)}$ if and only if
	\begin{equation}\label{eq:b_eq_1}
		b = \frac{a(r(2\gamma+1)-4\gamma)+6\gamma-3r}{4\gamma^2-r}.
	\end{equation}
	On the other hand, $(1-a)\frac{\gamma-1}{2\gamma}=\frac{a-b}{2}$ if and only if 
	\begin{equation}\label{eq:b_eq_2}
		b = \frac{a(2\gamma-1)-\gamma+1}{\gamma}.
	\end{equation}
	By equating~\eqref{eq:b_eq_1} and~\eqref{eq:b_eq_2} and solving for $a$ we obtain that 
	\[ a=\frac{4\gamma^3+2\gamma^2 - 4r\gamma + r}{8\gamma^3 - r(2\gamma^2 + 3\gamma - 1)}\quad \text{ and }\quad (1-a)\frac{\gamma-1}{2\gamma}=\frac{(2\gamma-r)(2\gamma-1)(\gamma-1)}{2(8\gamma^3 - r(2\gamma^2 + 3\gamma - 1))}{.}
	\]
	Now since $r\le 2\gamma$, we have that $2\gamma^2-4r\gamma\ge -2\gamma^2-2r\gamma$ so
	\begin{align*}
		a \ge \frac{4\gamma^3-2\gamma^2 - 2r\gamma + r}{8\gamma^3 - r(2\gamma^2 + 3\gamma - 1)}=\frac{(2\gamma^2-r)(2\gamma-1)}{8\gamma^3 - r(2\gamma^2 + 3\gamma - 1)}
		\ge \frac{1-a}{\gamma}.
	\end{align*}
	It follows that
	\[ \kappa(\gamma,r)=\min\Big\{(1-a)\frac{\gamma-1}{2\gamma},\frac{a(\gamma-1)}{2} \Big\}=(1-a)\frac{\gamma-1}{2\gamma}=\frac{(2\gamma-r)(2\gamma-1)(\gamma-1)}{2(8\gamma^3 - r(2\gamma^2 + 3\gamma - 1))}.\]
	Since $\kappa(\gamma,r)>0$, by~\eqref{eq:wip_implicit_Wp_bd} we have that $0<b<a<1$, as required.
\end{proof}
\bibliographystyle{alpha}
\bibliography{rates_wip_functional_correlation}
\end{document}